\newtheorem{theorem}{Theorem}[section]
\newtheorem{conjecture}[theorem]{Conjecture}
\newtheorem{proposition}[theorem]{Proposition}
\newtheorem{lemma}[theorem]{Lemma}
\newtheorem{definition}[theorem]{Definition}
\numberwithin{equation}{section} \numberwithin{theorem}{section}
\newcommand{\ind}{{\large \bf{1}}}
\newcommand{\T}{\ensuremath{\mathcal{T}}}
\newcommand{\Z}{\mathbb Z}
\newcommand{\R}{\mathbb R}
\newcommand{\G}{\mathcal G}
\newcommand{\X}{\mathcal X}
\newcommand{\x}{\mathbf x}
\newcommand{\y}{\mathbf y}
\newcommand{\0}{\mathbf 0}
\newcommand{\M}{\mathcal M}
\newcommand{\I}{\mathcal I}
\title{Limiting shapes for a non-abelian sandpile growth model and related cellular automata}
\author{Anne Fey $^{*,\dagger}$
\and
Haiyan Liu $^*$}
\begin{document}

\bigskip

\begin{abstract}
We present limiting shape results for a non-abelian variant of the abelian sandpile growth model (ASGM), some of which have no analog in the ASGM. One of our limiting shapes is an octagon. In our model, mass spreads from the origin by the toppling rule in Zhang's sandpile model. Previously, several limiting shape results have been obtained for the ASGM using abelianness and monotonicity as main tools. As both properties fail for our model, we use a new proof technique: in our main proof, we introduce several cellular automata to mimic our growth model.
\end{abstract}

\maketitle

{\em Key words:} {sandpile model, cellular automata, growth model, limiting shape, Green's function}

\bigskip

{\small \emph{$^*$ VU University Amsterdam, De Boelelaan 1081a, 1081 HV Amsterdam, The Netherlands,} \texttt{hliu@few.vu.nl,ac.den.boer@few.vu.nl}}

{\small \emph{$^\dagger$CWI, P.O. Box 94079, 1090 GB Amsterdam, The Netherlands,} \texttt{a.c.fey@cwi.nl}}

%\medskip

%{\small \emph{Corresponding author: Anne Fey}}

\bigskip

\section{Introduction}

We consider the following setup. Start with a pile of mass $n \geq 1$ at the origin of the rectangular grid $\Z^d$, and mass $h<1$ at every other site, 
where $n$ and $h$ are real numbers. Mass may be moved by `splitting piles', that is, one may take all the mass from one site, and divide it 
evenly among its neighbors. One can only split `unstable' piles of mass at least 1, and one can not stop before all the piles have mass less than 1. We call $\T$ the - possibly random - set of sites where at least one split was performed.

Will the mass spread over all of $\Z^d$, or will $\T$ be a finite subset of $\Z^d$? In the first case, how does it spread? In the last case, what is the size and shape of this set, depending on $h$ and $n$? How do these answers depend on the order of splitting?

This splitting game is related to abelian sandpile growth models. In fact, the `splitting pile' rule is the same as the toppling rule
in Zhang's sandpile model \cite{zhang}, a sandpile model that has not been considered as a growth model before. A notorious difficulty of moving 
mass by the Zhang toppling rule is the non-abelianness, that is, the end result depends on the order of topplings \cite{FMQR,haiyan}. 

In other studies of sandpile growth models \cite{FLP,shapes,lionel}, ample use was made of the freedom, by abelianness, to choose some convenient 
toppling order. In this way, information could be derived about limiting shapes, growth rates and about whether an explosion occurs or not. The term `explosion' is introduced in \cite{FLP}: if an explosion occurs, then the mass from the origin spreads over all of $\Z^d$, and every site topples infinitely often.
For `robust backgrounds', that is, values of $h$ such that an explosion never occurs, one can examine the growth rate and existence of a limiting 
shape as $n \to \infty$. Roughly speaking, the growth rate is the radius of $\T$ as a function of $n$, and if the set $\T$, properly 
scaled, converges to a deterministic shape as $n \to \infty$, then that is the limiting shape. 
%For explosive backgrounds, one can examine the growth rate and existence of a limiting shape for fixed $n$, as time tends to infinity.

In \cite{lionel}, the main topic is the rotor router model. In the rotor router model, the mass consists of discrete grains, so that only discrete 
values of $h$ are possible. All sites except the origin start empty, and at all sites there is a router which points to a neighbor. Instead of splitting, a pile that has at least one grain may give a grain to the neighbor indicated by the router. The router then rotates to point to the next neighbor, for a cyclic ordering of all the neighbors. It was proved that the limiting shape is a sphere, and the growth rate 
is proportional to $n^{1/d}$. The proof makes use of properties of Green's function for simple random walk.
In this paper, we will demonstrate that the method of \cite{lionel} can be adapted for our model, resulting in a growth rate proportional to $n^{1/d}$ for all $h < 0$; see Theorem \ref{ball}.

In the abelian sandpile model (for background, see \cite{dhar,meester}), the mass also consists of discrete sand grains. Instead of splitting, a 
pile that consists of at least $2d$ grains may {\em topple}, that is, give one grain to each of its neighbors. Thus, $2d$ consecutive rotor router moves of one site equal one abelian sandpile toppling of that site. In \cite{shapes}, making use of this equivalence, it was shown that for the abelian sandpile growth model with $h \to -\infty$, the limiting shape is a sphere. 
Moreover, it was proved that for $h = 2d-2$, the limiting shape is a cube. In \cite{FLP}, it was proved that for all $h \leq 2d-2$, the growth rate 
is proportional to $n^{1/d}$.

All these proofs heavily rely on the abelianness, which is an almost routine technique for the abelian sandpile model, but fails to hold in our 
case. For instance, consider the case $d=1$, $n=4$, and $h = 0$. If we choose the parallel updating rule that is common in cellular automata, 
in each time step splitting every unstable site, then we end up with
$$
\ldots,0,1/2,3/4,3/4,0,3/4,3/4,1/2,0,\ldots
$$
However, if we for example choose to split in each time step only the leftmost unstable site, then we end up with
$$
\ldots, 0,1/2,1/2,7/8,3/4,0,3/4,5/8,0,\ldots
$$
For arbitrary splitting order, it may even depend solely on the order if there occurs an explosion or not; see the examples in \cite{haiyan}, 
Section 4.1.
In this paper, we focus primarily on the parallel splitting order, but several of our results are valid for arbitrary splitting order.

Another complicating property of our model is that unlike the abelian sandpile model, we have no monotonicity in $h$ nor in $n$.
In the abelian sandpile growth model, it is almost trivially true that for fixed $h$, $\T$ is nondecreasing in $n$, and for fixed $n$, $\T$ is nondecreasing in $h$. For our growth model however, this is false, even if we fix the parallel splitting order. 
Consider the following examples for $d=1$: For the first example, fix $h = 23/64$. %\approx 0.36\ldots$. 
Then if $n = 165/32 \approx 5.16\ldots$, we find that $\T$ is the interval $[-5,-4,\ldots,4,5]$. However, if $n = 167/32 \approx 5.22\ldots$, then $\T = [-4,-3,\ldots,3,4]$.
For the second example, fix $n = 343/64$. % \approx 5.36\ldots$. 
Then if $h = 21/64 \approx 0.33\ldots$, we find that $\T = [-5,-4,\ldots,4,5]$, but if $h = 23/64 \approx 0.36\ldots$, then $\T = [-4,-3,\ldots,3,4]$.

%$\T^h_{n'} \subseteq \T^h_{n}$, and $\T_{n}^{h'} \subseteq \T_{n}^{h}$, for any $n' < n$ and $h' < h$. For our growth model however, this is false, even if we fix the parallel splitting order. For example, consider for $d=1$ the following cases: if we choose $h = 23/64 \approx 0.36\ldots$, $n' = 165/32 \approx 5.16\ldots$ and $n = 167/32 \approx 5.22\ldots$, then $\T^h_{n'} \supset \T^h_{n}$. More precisely, $\T^h_{n'}$ is the interval $[-5,-4,\ldots,4,5]$, while $\T^h_{n}$ is the interval $[-4,-3,\ldots,3,4]$. Similarly, if we choose $n = 343/64 \approx 5.36\ldots$, $h' = 21/64 \approx 0.33\ldots$, and $h = 23/64 \approx 0.36\ldots$, then $\T_{n}^{h'} \supset \T_{n}^{h}$.

%$|\T^{0.36}_{5.16}| = 11$, but with a higher value of $n$, $|\T^{0.36}_{5.22}| = 9$; and $|\T_{5.36}^{0.33}| = 11$, but with a higher value of $h$, 
%$|\T_{5.36}^{0.36}| = 9$.

One should take care when performing numerical simulations for this model. Since in each splitting, a real number is divided 
by $2d$, one quickly encounters rounding errors due to limited machine accuracy. All simulations presented in this paper were done by performing 
exact calculations in binary format. 

This article is organized as follows: after giving definitions in Section \ref{definitionssection}, we present our results and some short proofs in Section \ref{mainresultssection}. Our main result is that for $h$ explosive and with the parallel splitting order, the splitting model exhibits several different limiting shapes as $t \to \infty$. We find a square, a diamond and an octagon. These results are stated in our main Theorem \ref{differentshapes}, which is proved in Section \ref{explosionssection}. Sections \ref{hvaluessection} and \ref{ballsection} contain the remaining proofs of our other results. Finally, in Section \ref{openproblems} we comment on some open problems for this model. 

\section{Definitions}
\label{definitionssection}

In this section, we formally define the splitting model. While we focus primarily on the parallel order of splitting, some of our results are also valid for more general splitting order. Therefore, we give a general definition of the
splitting model, with the splitting automaton (parallel splitting order) as a special case. 

For $n\in[0, \infty)$ and $h\in(-\infty, 1)$, $\eta_n^{h}$ is the configuration given by 

$$
\eta_n^{h}(\x) = \left\{ \begin{array}{ll}
									n & \mbox{ if } \x = \0,\\
									h & \mbox{ if } \x \in \Z^d\setminus \0.\\
								\end{array} \right.
$$ 
For every $t = 0,1,2,\ldots$, and for fixed $n$ and $h$, 
$\eta_{t}$ is the configuration at time $t$, and the initial configuration is $\eta_{0}=\eta_n^h$. We interpret $\eta_{t}(\x)$ as the mass at site $\x$ at time $t$. 

We now describe how $\eta_{t+1}$ is obtained from $\eta_{t}$, for every $t$.
Denote by $\mathcal{U}_{t} = \{\x: \eta_{t}(\x) \geq 1\}$ the set of all {\em unstable} sites at time $t$.
$\mathcal{S}_{t+1}$ is a (possibly random) subset of $\mathcal{U}_{t}$. We say that $\mathcal{S}_{t+1}$ is the set of sites that {\em split} at time $t+1$. Then the configuration at time $t+1$ is for all $\x$ defined by 
$$
\eta_{t+1}(\x) = \eta_{t}(\x) \left(1 - \ind_{\x \in \mathcal{S}_{t+1}}\right) + \frac 1{2d} \sum_{\y \in \mathcal{S}_{t+1}} \eta_{t}(\y) \ind_{\y \sim \x}.
$$

The {\em splitting order} of the model determines how we choose $\mathcal{S}_{t+1}$, given $\mathcal{U}_{t}$, for every $t$. For example, if we have the parallel splitting order then we choose $\mathcal{S}_{t+1} = \mathcal{U}_{t}$ for every $t$. In this case, we call our model the {\em splitting automaton}. Some of our results are also valid for other splitting orders. In this paper we only consider splitting orders with the following properties: At every time $t$, $\mathcal{S}_{t+1}$ is non-empty unless $\mathcal{U}_{t}$ is empty, and
for every $\x$ that is unstable at time $t$, there exists a finite time $t_0$ such that $x\in\mathcal{S}_{t+t_0}$. 
For example, we allow the random splitting order where $\mathcal{S}_{t+1}$ contains a single element of $\mathcal{U}_{t}$, chosen uniformly at random from all elements of $\mathcal{U}_{t}$. With this splitting order, at each time step a single site splits, randomly chosen from all unstable sites. This splitting order is valid because $\mathcal{U}_t$ increases slowly enough. Since only the neighbors of sites that split can become unstable, we have for any splitting order that $\mathcal{U}_{t+1} \subseteq \mathcal{U}_{t} + \partial \mathcal{U}_t$, where with $\partial \X$ for a set $\mathcal{X} \subset \Z^d$, we denote the set of sites that are not in $\X$, but have at least one neighbor in $\X$. But when every time step only a single site splits, at most $2d$ sites can become unstable, so that $|\mathcal{U}_{t+1}| \leq |\mathcal{U}_{t}| + 2d$.

We are interested in the properties of 
$$
\T_{t} = \bigcup_{0 < t' \leq t}\mathcal{S}_{t'},
$$
all the sites that split at least once until time $t$, as well as 
$$
\T=\lim_{t \to \infty}\T_{t}.
$$ 

For a fixed splitting order, we say that $\eta_n^h$ {\em stabilizes} if in the limit $t \to \infty$, for every $\x$, the total number of times that site $\x$ splits is finite. Note that if $\eta_n^h$ does not stabilize, then every site splits infinitely often. We also remark that in order to show that $\eta^h_n$ does not stabilize, it suffices to show that $\T = \Z^d$. Namely, if $\T = \Z^d$, then every site splits infinitely often.
Otherwise, there is a site $\x$ and a time $t$ such that $\x$ does not split at any time $t'>t$, but each of its neighbors $\y_i$, $i = 1,2, \ldots, 2d$, splits at some time $t_i \geq t$. But then at time $\max_i t_i$, $\x$ is not stable, because it received at least mass $\frac 1{2d}$ from each of its neighbors. Therefore, $\x$ must split again.

We call $\eta_n^h$ {\em stabilizable} if $\eta_n^h$ stabilizes almost surely (The ``almost surely" refers to randomness in the splitting order). 
As defined in \cite{FLP}, 
\begin{definition}
The background $h$ is said to be {\em robust} if $\eta_n^h$ is stabilizable for all finite $n$; 
it is said to be {\em explosive} if there is a $N^h<\infty$ such that for all $n\geq N^h$, $\eta_n^h$ is not stabilizable.
\end{definition}

{\bf Remark } 
We expect the splitting model for every $h$ to be either robust or explosive, independent of the splitting order (see Section \ref{criticalh}). However, even for a fixed splitting order we cannot a priori exclude intermediate cases where the background is neither robust nor explosive. 
For example, since the splitting model is not monotone in $n$, it might occur for some $h$ that for every $n$, 
there exist $n_1 > n_2 > n$ such that $\eta_{n_1}^h$ is stabilizable, but $\eta_{n_2}^h$ is not.
  
Finally, we give our definition of a limiting shape. In \cite{FLP,shapes, lionel}, limiting shape results were obtained for $\T$ in the limit $n \to \infty$, for robust background. In this paper however, we present limiting shape results for $\T_{t}$ with $n$ fixed, parallel splitting order and explosive background. We study the limiting behavior in $t$ rather than in $n$. Accordingly, we have a different definition for the limiting shape. 

%Note that according to this definition, the splitting automaton trivially has a limiting shape for $n$ such that $\eta^h_n$ stabilizes, because $\T_{n,t}^h$ ; however, we will only consider limiting shapes for exploding $\eta_n^h$.
 
Let $\mathbf{C}$ denote the cube of radius 1/2 centered at the origin $\{\x \in \R^d: \max_i x_i \leq 1/2\}$. Then $\x + \mathbf{C}$ is the same cube centered at $\x$, and by $\mathcal{V} + \mathbf{C}$ we denote the volume $\bigcup _{\x \in \mathcal{V}} (\x + \mathbf{C})$.

%For a set $\mathcal{V}\subset\mathbb{Z}^d$, $\mathcal{V} + \mathbf{C}$ is the volume in $\R^d$ obtained from $\mathcal{V}$ by `filling in the cubes'. 
 
\begin{definition}
Let $\mathcal{V}_t$, with $t = 0,1,\ldots$ be a sequence of sets in $\Z^d$. Let $\mathbf{S}$ be a deterministic shape in $\R^d$, scaled such that $\max_{x_i} \{ \x \in \mathbf{S}\} = 1$.
Let $\mathbf{S}_\epsilon$ and $\mathbf{S}^\epsilon$ denote respectively the inner and outer $\epsilon$-neighborhoods of $\mathbf{S}$.
We say that $\mathbf{S}$ is the {\em limiting shape} of $\mathcal{V}_t$, 
if there is a scaling function $f(t)$, and for all $\epsilon>0$ there is a $t^\epsilon$ such that for all $t>t^\epsilon$, 
$$
 \mathbf{S}_\epsilon \subseteq f(t) \left(\mathcal{V}_{t} + \mathbf{C} \right) \subseteq \mathbf{S}^\epsilon.
$$
If $\mathbf{S}$ is the limiting shape of $\T_t$, then we say that $\mathbf{S}$ is the limiting shape of the splitting automaton.
\end{definition}

\medskip

\section{Main results}
\label{mainresultssection}

\begin{figure}
\centering
\includegraphics[width=.315\textwidth]{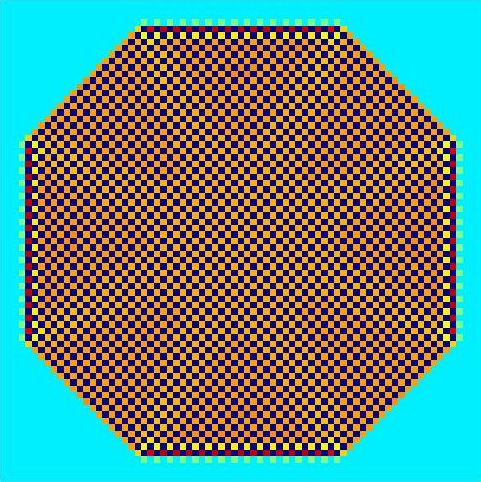}
\hspace{0.1cm}
\includegraphics[width=.315\textwidth]{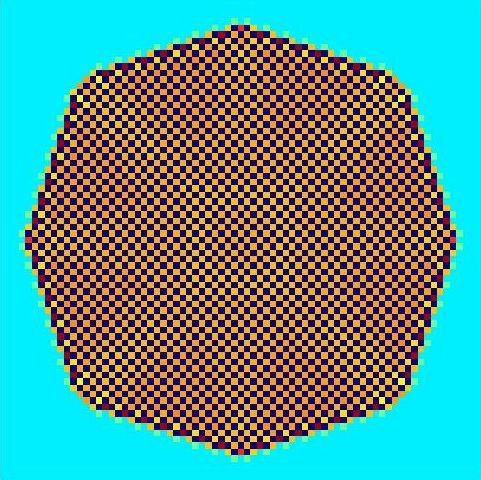}
\hspace{0.1cm}
\includegraphics[width=.315\textwidth]{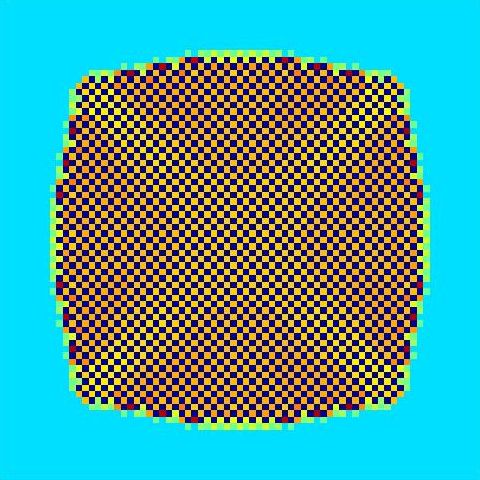}

\smallskip

\includegraphics[width=.315\textwidth]{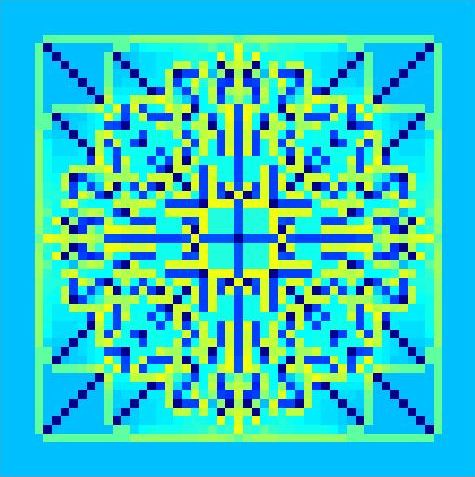}
\hspace{0.1cm}
\includegraphics[width=.315\textwidth]{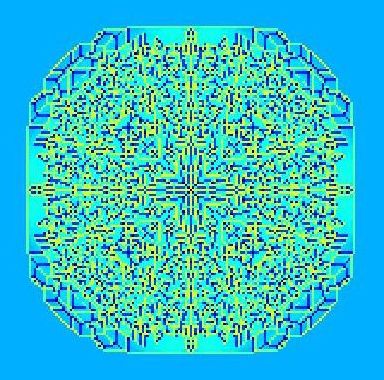}
\hspace{0.1cm}
\includegraphics[width=.315\textwidth]{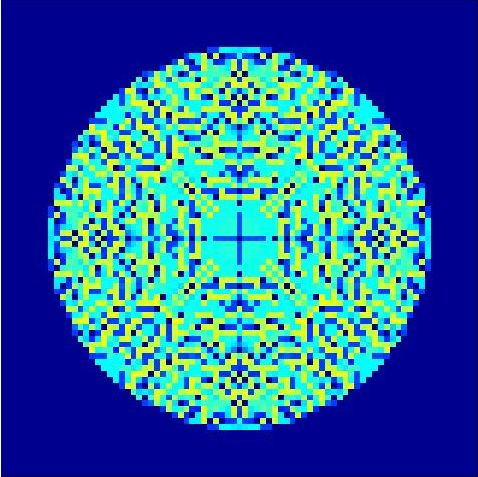}
\caption{The splitting automaton for different values of $h$. ``Warmer" color is larger mass; dark yellow, orange or red cells have mass $\geq 1$. Top row: $h = 47/64 \approx 0.734$; $h = 1495/2048 \approx 0.73$ and $h = 727/1024 \approx 0.71$, each with $n=8$ and after 50 time steps. Bottom row: $h = 1/2$ and $n = 256$; $h = 511/1024 \approx 0.499$ and $n = 2048$; $h = 0$ and $n = 2048$, each after the model stabilized.}
\label{plaatjes}
\end{figure}

We have observed - see Figure \ref{plaatjes} - that varying $h$ has a striking effect on the dynamics of the splitting automaton. For large values of $h$, $\T_{t}$ appears to grow in time with linear speed, resembling a polygon, but which polygon depends on the value of $h$. Figures \ref{plaatjes} and \ref{explosions} support the conjecture that for the parallel splitting order, as $t \to \infty$, there are many possible different limiting shapes depending on $h$. Our main result, Theorem \ref{differentshapes}, is that there are at least three different polygonal limiting shapes, for three different intervals of $h$. 
For small values of $h$, the splitting model stabilizes; see Theorem \ref{hvalues}. In between, there is a third regime that we were not able to characterize. It appears that for $h$ in this regime, $\T_t$ keeps increasing in time, but does not have a polygonal limiting shape. We comment on this in Section \ref{openproblems}.

\begin{figure}
\begin{center}
\includegraphics[width=.8\textwidth]{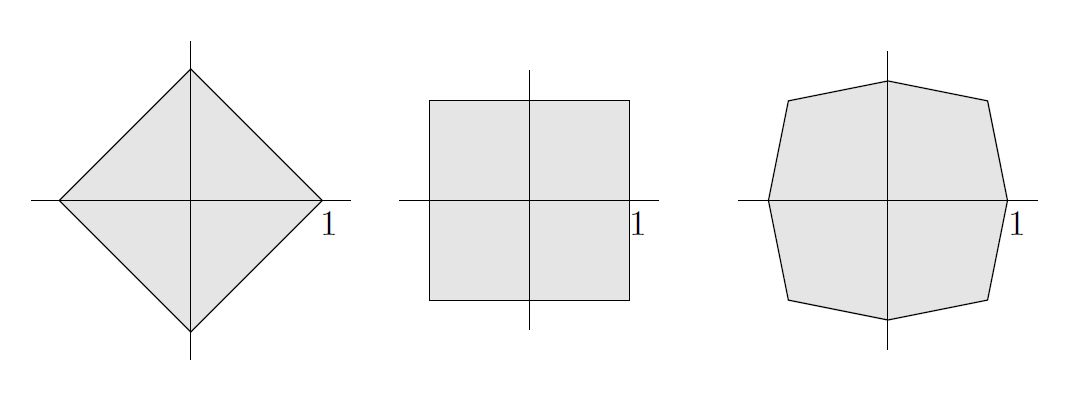}
\end{center}
\caption{The diamond $\mathbf{D}$, the square $\mathbf{Q}$, and the octagon $\mathbf{O}$.}
\label{threeshapes}
\end{figure}

\medskip
\clearpage

Let $\mathbf{D}$ be the diamond in $\R^d$ with radius 1 centered at the origin; let $\mathbf{Q}$ be the square with radius 1 in $\R^2$ centered at the origin, and let $\mathbf{O}$ be the octagon in $\R^2$ with vertices (0,1) and ($\frac 56$,$\frac 56$), and the other six vertices follow from symmetry. See Figure \ref{threeshapes} for these shapes.

\begin{theorem}~
\begin{enumerate}
\item  The limiting shape of the splitting automaton on $\Z^d$, for $1-\frac 1{2d} \leq h < 1$ and $n \geq 1$, is the diamond $\mathbf{D}$.
The scaling function is $f(t) = \frac 1t$.

\item The limiting shape of splitting automaton on $\Z^2$, for $ 7/10 \leq h < 40/57$ and $4-4h \leq n \leq 16-20h$, is the square $\mathbf{Q}$. The scaling function is $f(t) = \frac 2t$.

\item The limiting shape of the splitting automaton on $\Z^2$, for $ 5/7 \leq h < 13/18$ and $n=3$, is the octagon $\mathbf{O}$.
The scaling function is $f(t) = \frac 5{3t}$.

\end{enumerate}
\label{differentshapes}
\end{theorem}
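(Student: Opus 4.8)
The plan is to treat the three cases separately: case (1) admits a short direct induction, whereas cases (2) and (3) require the cellular-automaton comparison announced in the introduction. In every case I would first use that $\eta_t$ inherits every lattice symmetry fixing the origin, so it suffices to control $\T_t$ inside one fundamental domain — a coordinate half-axis together with the adjacent diagonal sector — the vertices of the claimed polygons being exactly where the growth along the axis meets the growth along the diagonal.

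For case (1), put $B_r=\{\x\in\Z^d:\|\x\|_1\le r\}$. Since a site can split only after a neighbour has sent it mass, information travels at unit $\ell^1$-speed: $\mathcal{U}_t\subseteq B_t$, hence $\mathcal{S}_{t'}=\mathcal{U}_{t'-1}\subseteq B_{t'-1}$ and so $\T_t\subseteq B_{t-1}$. For the reverse inclusion I would prove by induction on $t$ that $\partial B_t\subseteq\mathcal{U}_t$: if $\|\x\|_1=t+1$, then $\x$ has received no mass before time $t+1$ (its only neighbours in $B_t$ lie on $\partial B_t$, and such sites do not split before time $t+1$), so $\eta_t(\x)=h$; and $\x$ has a neighbour $\y\in\partial B_t\subseteq\mathcal{U}_t$, which splits at time $t+1$ and sends $\x$ at least $\tfrac1{2d}\eta_t(\y)\ge\tfrac1{2d}$, whence $\eta_{t+1}(\x)\ge h+\tfrac1{2d}\ge 1$ precisely because $h\ge 1-\tfrac1{2d}$. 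Thus $\mathcal{S}_{t+1}=\mathcal{U}_t\supseteq\partial B_t$, so in fact $\T_t=B_{t-1}$ and $\tfrac1t(\T_t+\mathbf{C})\to\mathbf{D}$.

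For cases (2) and (3) monotonicity is unavailable, so one cannot run an induction directly on the real masses. Instead I would: (i) compute by hand the transient of $\eta_t$ for the first $O(1)$ steps and isolate, in the axis direction and in the diagonal direction, a finite window of sites carrying a fixed mass profile that is translated outward at a fixed rational speed — $\tfrac12$ along the axis and $\tfrac12$ per coordinate along the diagonal for the square (matching $f(t)=\tfrac2t$), and $\tfrac35$ along the axis and $\tfrac12$ per coordinate along the diagonal for the octagon (matching $f(t)=\tfrac5{3t}$ and the vertex $(\tfrac56,\tfrac56)$); (ii) verify by a finite computation that one application of the splitting rule reproduces each profile, so that once present it persists; (iii) show the profile actually appears, and that no site outside the claimed polygon ever becomes unstable, by sandwiching $\eta_t$ between two cellular automata on a finite alphabet — an inner one whose ever-active set is contained in $\T_t$, and an outer one whose ever-active set contains $\T_t$ — obtained by rounding masses down, respectively up, to the finitely many values the parameter ranges force near the frontier, and checking that the splitting update of $\eta_t$ refines the update of each automaton. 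The inequalities cutting out the windows $7/10\le h<40/57$, $4-4h\le n\le16-20h$ and $5/7\le h<13/18$, $n=3$ are exactly what makes these roundings self-consistent: the lower bounds force the sites that should topple to actually topple (e.g. $n\ge4-4h$ is precisely the condition in $d=2$ for the first ring to become unstable), while the upper bounds forbid the faster propagation that would round the corners off the square or flatten them out of the octagon. Since the inner and outer automata share a linear growth rate and limiting shape — the polygon in question, with the stated scaling — so does $\T_t$.

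The main obstacle is step (iii): without monotonicity $\eta_t$ cannot be dominated by a single monotone process, so the two comparison automata must be built so that the rounded dynamics are \emph{exactly} compatible with the splitting rule on the finite list of local configurations that occur near the frontier, which forces a delicate case analysis — and it is here that the precise endpoints of the $h$- and $n$-intervals are consumed, which is also why the three shapes require three different windows. A secondary difficulty is making the transient uniform: to produce the threshold $t^\epsilon$ in the definition of limiting shape one must bound by a constant the number of steps after which the travelling profile is in force in each direction, again a finite but laborious verification.
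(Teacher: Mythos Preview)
Your treatment of case (1) is essentially correct and equivalent to the paper's argument: both show that $\T_t$ coincides with the $\ell^1$-ball of radius $t-1$ via a direct induction on $t$. The paper dresses this up in cellular-automaton language (a three-state automaton with labels $e,\hbar,u$ mapped to the mass intervals $\{0\},\{h\},[1,\infty)$), but the content is the same.

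For cases (2) and (3), however, your proposal diverges from the paper's method in a way that matters. You propose to sandwich $\eta_t$ between an \emph{inner} and an \emph{outer} finite-state automaton, obtained by rounding masses down and up respectively, and to track travelling profiles separately along the axis and the diagonal. But, as you yourself note, the splitting dynamics is not monotone in the mass configuration; there is therefore no reason a priori that rounding down yields a process whose active set stays inside $\T_t$, or that rounding up yields one that contains it --- a rounded configuration can evolve so as to cross the real one. The paper does not sandwich at all. It constructs a \emph{single} finite-state cellular automaton $\xi_t$ and a map $\mathcal M$ from labels to \emph{intervals} of mass (not to point values), and proves by induction that $\eta_t(\x)\in\mathcal M(\xi_t(\x))$ for every $\x$ and every $t$. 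This is checked transition rule by transition rule: for the square seven labels and twelve rules suffice; for the octagon eleven labels and twenty rules, after first running the splitting automaton eight steps by hand to reach a tractable initial labelling. The parameter windows $7/10\le h<40/57$ and $5/7\le h<13/18$ emerge exactly as the intersection, over all rules, of the conditions that the image interval of each rule be contained in the target label's interval. The limiting shape is then read off from a global periodic pattern of labels (period $2$ for the square, period $10$ for the octagon, described via explicit ``tiles''), not from separate axial and diagonal waves whose interaction would still need to be controlled.

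So the genuine gap is step (iii): the two-automaton sandwich is the natural idea when comparison processes are monotone, but here they are not, and the paper's single-automaton-with-interval-labels device is precisely the replacement that makes the induction close.
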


In the abelian sandpile growth model, $h=2d-1$ is the only possible explosive background value, and our proof of part 1 also works for that 
situation. However, the second two parts have no analog in the abelian sandpile growth model.

Our proof uses the method of mimicking the splitting automaton with a finite state space cellular automaton; we consider our three explicit results 
as introductory examples for this method. We expect that with this method, many more limiting shape results can be obtained.

Next, we characterize several regimes of $h$ for the splitting model on $\Z^d$.

\medskip

\begin{theorem}
\label{hvalues}
In the splitting model on $\mathbb{Z}^d$,
\begin{enumerate}
\item The background is explosive if $h \geq 1-\frac 1{2d}$,
\item The background is robust if $h < \frac 12$,
\item In the splitting automaton, for $d \geq 2$, there exist constants $C_d < 1 - \frac{3}{4d+2}$ such that the background is explosive if $h \geq C_d$.
\end{enumerate}
\end{theorem}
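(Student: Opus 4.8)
The three parts are essentially independent, so I will treat them in turn.

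\emph{Part (1): explosiveness for $h\ge 1-\tfrac1{2d}$.} I would argue by induction on the graph distance to the origin, and the argument is valid for every admissible splitting order. Two elementary facts drive it: (a) the mass at a site decreases only at a time step at which that site itself splits, so once a site is unstable it stays unstable until it splits, and it does split after finitely many steps by the standing hypothesis on the splitting order; and (b) when an unstable site $\x$ splits, say at time $t+1$, every neighbour $\y\sim\x$ that has never been unstable satisfies $\y\notin\mathcal S_{t+1}$ and hence
$$
\eta_{t+1}(\y)\ \ge\ \eta_t(\y)+\tfrac1{2d}\,\eta_t(\x)\ \ge\ h+\tfrac1{2d}\ \ge\ 1,
$$
using $\eta_t(\x)\ge 1$ (still unstable at time $t$, by (a)), $\eta_t(\y)\ge\eta_0(\y)\ge h$, and $h\ge 1-\tfrac1{2d}$; so $\y$ becomes unstable. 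Since the origin is unstable whenever $n\ge 1$, induction on distance shows every site of $\Z^d$ eventually becomes unstable, hence splits, so $\T=\Z^d$; by the observation in Section \ref{definitionssection} $\eta_n^h$ is then not stabilizable, and one may take $N^h=1$.

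\emph{Part (2): robustness for $h<\tfrac12$.} By the observation in Section \ref{definitionssection} it suffices to rule out $\T=\Z^d$. The main tool is conservation of mass over the finite set $\T_t$: since no site outside $\T_t$ has split by time $t$, no mass has flowed into $\T_t$ from outside, whence
$$
0\ \le\ \sum_{\x\in\T_t}\eta_t(\x)\ =\ n+h\bigl(|\T_t|-1\bigr)-\tilde G_t,
$$
where $\tilde G_t\ge 0$ is the total mass that has crossed from $\T_t$ to its complement (and all of which resides on $\partial\T_t$). For $h<0$ this identity alone forces $|\T_t|\le 1+n/|h|$ uniformly in $t$, so $\T$ is finite and $\eta_n^h$ stabilizes. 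For $0\le h<\tfrac12$ one must instead exploit that each site joins $\T$ only after absorbing at least $1-h$ of mass from the current active set, this mass being collected from a bounded number of already-active neighbours; tracking it as $\T_t$ advances should give a recursive estimate for the mass carried by the frontier whose contraction ratio is the largest fraction of a frontier site's $2d$ neighbours that can lie strictly inside $\T_t$ — at most $d$ (as for the flat faces of an $\ell^1$-ball), hence at most $\tfrac12$ — so that the frontier mass falls below $1$ and $\T_t$ stops growing, contradicting $\T=\Z^d$. I expect making this damping estimate rigorous, for an arbitrary admissible splitting order and an arbitrarily shaped $\T_t$, to be the main obstacle of the whole theorem; the constant $\tfrac12$ is exactly what makes the contraction work (and need not be optimal for $d\ge 2$).

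\emph{Part (3): a better explosive threshold for the parallel automaton, $d\ge2$.} Here the dynamics is deterministic and one can improve on part (1) because, under the parallel rule, a boundary site can be fed from several directions at once and interior sites keep re-splitting and pumping mass outward. The plan is to exhibit an increasing sequence of finite ``fully lit'' regions $\mathcal L_0\subset\mathcal L_1\subset\cdots$ with $\bigcup_k\mathcal L_k=\Z^d$ and times $t_0<t_1<\cdots$ such that, if at time $t_k$ every site of $\mathcal L_k$ carries mass in a prescribed favourable window, then the same holds for $\mathcal L_{k+1}$ at time $t_{k+1}$; iterating gives $\T=\Z^d$, and for $n\ge N^h$ large the origin pile first lights up a block around $\0$ so the induction can start. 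The regions $\mathcal L_k$ should not be cubes — at a cube corner a freshly added site sees only one lit neighbour, which reproduces exactly the threshold $1-\tfrac1{2d}$ of part (1) — but mildly rounded polygons, chosen so that every freshly added boundary site is fed, within one or two parallel steps, by at least three lit neighbours, with a further surplus from the interior re-splittings; balancing this against the background deficit $1-h$ yields a propagation criterion of the form $h\ge C_d$, and a direct computation with the optimal such shape bounds $C_d<1-\tfrac3{4d+2}$. The main difficulty is to keep the geometry of $\mathcal L_k$ (above all its corners) under control as $k\to\infty$ so that the one-step propagation estimate is uniform in $k$; this is precisely the setting of the finite-state cellular-automaton method of Section \ref{explosionssection}, in whose language the whole argument could alternatively be cast.
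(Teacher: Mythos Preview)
Your Part (1) is correct and equivalent to the paper's argument; the paper phrases it as a one-line contradiction (if $\T$ were finite, a site in $\partial\T$ would have received at least $\tfrac1{2d}$ yet not split), but your forward induction on distance does the same work.

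Your Part (2) has a genuine gap for $0\le h<\tfrac12$. You yourself flag the ``frontier damping'' estimate as non-rigorous, and in fact the paper does \emph{not} proceed that way at all. The paper's argument is a single clean mass-count on $\T_t\cup\partial\T_t$: the total mass there is conserved (equal to $n+h|\T_t|+h|\partial\T_t|$), and the key combinatorial lemma is that the number of internal edges of $\T_t\cup\partial\T_t$ is at least $d|\T_t|$ (count, for each $\x\in\T_t$, the $d$ edges leaving $\x$ into a fixed closed orthant). Since each internal edge carries mass $\ge\tfrac1{2d}$ at time $t$ (one endpoint has split, sending $\ge\tfrac1{2d}$ along that edge), the mass in $\T_t\cup\partial\T_t$ at time $t$ is at least $\tfrac12|\T_t|+h|\partial\T_t|$, giving $|\T_t|\le n/(\tfrac12-h)$ directly. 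No recursion on the frontier, no control of the shape of $\T_t$, and no splitting-order issues arise. Your damping heuristic, by contrast, would require tracking how many already-split neighbours a newly splitting site has, uniformly over all possible geometries of $\T_t$ and all admissible orders, and it is not clear this can be made to work.

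Your Part (3) is only a sketch, and it misses the two concrete ingredients the paper actually uses. First, the paper does not build rounded regions $\mathcal L_k$; instead it runs a one-dimensional induction \emph{along the main diagonal}: with $\Gamma_{k,i}\subset\mathcal L_{dk+i}$ the sites having $i$ neighbours in $\Gamma_{k,i-1}$ (so $\Gamma_{k,0}=(k,\dots,k)$ and $\Gamma_{k,d}=(k{+}1,\dots,k{+}1)$), one computes the masses $m(k,i)$ explicitly via $m(k,i)=h+\tfrac{i}{2d}m(k,i-1)$ and solves for the threshold $h$ at which the chain self-sustains; this yields the explicit $C_d'=\max\{1-\tfrac1d,\,h^*\}$ with $h^*$ given in closed form. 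Second --- and this is what lets the diagonal argument suffice --- the paper invokes Theorem \ref{rectangular}: for $h\ge 1-\tfrac1d$ the set $\T$ is forced to be a cube, so knowing that one site on each cube boundary splits already gives $\T=\Z^d$. Your plan to control the full boundary of a growing polygon uniformly in $k$ is therefore working much harder than necessary; the rectangular-shape lemma collapses the geometry to a single diagonal ray.
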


We give the proof of the first part here, because it is a very short argument. The proof of parts 2 and 3 will be given in Section \ref{hvaluessection}, where we give the precise form of $C_d$. We do not believe that this bound is sharp. From the simulations for $d=2$ for example, a transition between an explosive and robust regime appears to take place at $h = 2/3$, while in our proof of part 3, $C_2 = 13/19 \approx 0.684$.

\medskip

\noindent{\it Proof of Theorem \ref{hvalues}, part 1}

If $n \geq 1$, then the origin splits, so then $\T$ is not empty. Now suppose that $\T$ is a finite set. Then there exist sites outside $\T$ that have a neighbor in $\T$. Such a site received at least $\frac 1{2d}$, but did not split. For $h \geq 1-\frac 1{2d}$, this is a contradiction.
\qed

\medskip

Note that Theorem \ref{hvalues} does not exclude the possibility that there exists, for $d$ fixed, a single critical value of $h$ that separates 
explosive and robust backgrounds, independent of the splitting order. We only know this in the case $d=1$, for which the first two bounds are equal.

We give another result that can be proved by a short argument:

\medskip

\begin{theorem}
\label{rectangular}
In every splitting model on $\mathbb{Z}^d$, for every $n \geq 1$ and $h \geq 1-\frac 1d$, if the model stabilizes then $\T$ is a $d$-dimensional rectangle.
\end{theorem}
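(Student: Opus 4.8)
The plan is to combine two structural facts about the stabilised set $\T$ (which is finite when the model stabilises) and then to invoke a purely combinatorial lemma. First, I would observe that $\T$ is \emph{connected}: since $h<1$ we have $\mathcal{U}_0=\{\0\}$, so $\mathcal{S}_1=\{\0\}$ and $\T_1=\{\0\}$, and since a site becomes unstable only after one of its neighbours has split, $\mathcal{U}_t\subseteq\T_t\cup\partial\T_t$ for every $t$; hence each $\T_{t+1}=\T_t\cup\mathcal{S}_{t+1}$ is the connected set $\T_t$ with some of its boundary sites adjoined, so every $\T_t$, and therefore $\T=\bigcup_t\T_t$, is connected. Second, I would show that \emph{every site $\x\notin\T$ has at most one neighbour in $\T$}: if $\x$ had two neighbours $\y_1,\y_2\in\T$, then each $\y_i$ splits at least once, and at each such split its mass is at least $1$, so it contributes at least $\frac1{2d}$ to the mass of $\x$; since $\x$ itself never splits its mass is nondecreasing, so it eventually reaches at least $h+\frac1{2d}+\frac1{2d}=h+\frac1d\ge1$, making $\x$ unstable, whereupon admissibility of the splitting order forces $\x$ to split --- contradicting $\x\notin\T$. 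This is the only place $h\ge1-\frac1d$ is used.

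It would then remain to prove the combinatorial statement: a finite nonempty connected set $\T\subseteq\Z^d$ in which every exterior site has at most one neighbour in $\T$ is a box $\prod_{i=1}^d[a_i,b_i]$. I would argue by induction on $d$; the case $d=1$ is immediate because a connected subset of $\Z$ is an interval. For $d\ge2$ I would slice $\T$ along the last coordinate: put $\T_\z=\{k\in\Z:(\z,k)\in\T\}$ for $\z\in\Z^{d-1}$ and let $B=\{\z:\T_\z\ne\emptyset\}$ be the projection of $\T$, which is connected in $\Z^{d-1}$ (the projection maps each edge of $\Z^d$ to an edge or a single vertex, so images of connected sets are connected). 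The core is to establish: (i) each nonempty $\T_\z$ is an interval; (ii) for neighbouring $\z\sim\z'$ in $B$ the intervals $\T_\z$ and $\T_{\z'}$ are either equal or separated by at least one empty level --- a short case analysis, using that if $(\z,k)\in\T$ but $(\z',k)\notin\T$ then $(\z,k)$ must be the unique $\T$-neighbour of $(\z',k)$, which forces $k-1,k,k+1\notin\T_{\z'}$; and (iii) the gapped alternative in (ii) cannot occur, since $\T$ has no edge between two columns with disjoint fibres, so $B$ would partition into maximal ``levels'' carrying a constant fibre with no $\T$-edges between distinct levels, and connectedness of $\T$ would then leave only one level. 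Hence all fibres over $B$ coincide with one interval $I$, i.e.\ $\T=B\times I$; and $B$ is connected and inherits the single-neighbour property in $\Z^{d-1}$ (an exterior site of $B$ with two $B$-neighbours, paired with any $k\in I$, would give an exterior site of $\T$ with two $\T$-neighbours), so by induction $B$ is a $(d-1)$-dimensional box and $\T=B\times I$ is a $d$-dimensional box.

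The hard part will be step (i) --- excluding gaps inside a single column. A width-one gap is immediate, since the gap site then has two $\T$-neighbours along the column; but a wider gap has to be ruled out by playing the single-neighbour property against connectedness, since a path in $\T$ joining the two ends of a gap must leave the column below the gap and re-enter it above, and the single-neighbour property tightly restricts how this can happen; making that argument watertight is the delicate point. I would organise it with the help of an auxiliary fact that has a one-line lexicographic proof: $\Z^d\setminus\T$ has no bounded component, because a lexicographically maximal site $\x$ of such a component has $\x+\e_i\in\T$ for every $i$, hence $d\ge2$ neighbours in $\T$, violating the single-neighbour property. Equivalently, $\T$ has no holes; in particular this disposes of the degenerate ``opposite-direction'' gaps within a column and helps structure the proof of (i). Once $\T$ is shown to be a box the proof is finished --- no scaling function is involved here, since the claim is only that $\T$ is a rectangle.
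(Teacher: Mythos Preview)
Your approach is correct and matches the paper's: both rest on the single observation that a site outside $\T$ with two neighbours in $\T$ would have final mass at least $h+\tfrac1d\ge 1$, contradicting stability. The paper simply asserts the combinatorial converse (``if $\T$ is not a rectangle then some non-splitting site has at least two splitting neighbours'') as ``not hard to see'', whereas you supply the connectedness argument explicitly and sketch an inductive proof of that lemma, correctly flagging step~(i) as the one delicate point --- so your write-up is, if anything, more careful than the paper's.
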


\begin{proof}
Because $n \geq 1$, $\T$ is not empty. Suppose that $\T$ is not a rectangle. Then, as is not hard to see, there must exist a site that did not split, but has at least two neighbors that split. Therefore, its final mass is at least $h + \frac 1d$, but strictly less than 1. This can only be 
true for $h < 1-\frac 1d$.
\end{proof}

In the case of the parallel splitting order, we additionally have symmetry. Thus, for $h \geq 1- \frac 1d$, $\T$ is a cube. 
We remark that the above proof also works for the abelian sandpile model, thus considerably simplifying the proof of Theorem 4.1 (first 2 parts) in \cite{shapes}.

\medskip

Our final result gives information on the size and shape of $\T$ when $h < 0$. This theorem is similar to Theorem 4.1 of \cite{lionel}.
Let $\mathbf{B}_r$ denote the Euclidean ball in $\R^d$ with radius $r$, and let $\omega_d$ be the volume of $\mathbf{B}_1$. 

\medskip
	
\begin{theorem}~

\begin{enumerate}
\item ({\it Inner bound}) For all $h<1$,
$$
\mathbf{B}_{c_1r-c_2}\subset \T,
$$
with $c_1=(1-h)^{-1/d}$,
$r=(\frac{n}{\omega_d})^{1/d}$ and $c_2$ a constant which depends only on $d$;

\item ({\it Outer bound}) When $h < 0$, for every $\epsilon>0$,
$$
\T \subset \mathbf{B}_{c_1'r+c_2'},
$$
with $c_1'=(\frac{1}{2}-\epsilon-h)^{-1/d}$, $r=(\frac{n}{\omega_d})^{1/d}$ and $c_2'$ a constant which depends only on $\epsilon$, $h$ and $d$.
\end{enumerate}
\label{ball}
\end{theorem}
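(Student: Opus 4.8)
The plan is to mirror the potential-theoretic method of \cite{lionel}, with the rotor-router odometer replaced by the \emph{splitting odometer} $u(\x) = $ (total mass emitted from $\x$) $= (\text{number of splits at }\x)\cdot(\text{mass per split})$; more precisely, since each split of $\x$ sends $\eta_t(\x)/2d$ to each neighbor, the natural quantity is the total mass $m_{\x}$ ever received-and-resent by $\x$, and one works with the net mass transported across each edge. The key identity is the discrete divergence relation: if $\eta_\infty$ is a stable final configuration and $u$ records, for each site, the aggregate outflow, then for every $\x$,
\begin{equation}
\eta_\infty(\x) = \eta_0(\x) + \frac{1}{2d}\sum_{\y\sim\x} u(\y) - u(\x) = \eta_0(\x) + \Delta u(\x),
\end{equation}
where $\Delta$ is the discrete Laplacian normalized so that $\Delta u(\x) = \frac{1}{2d}\sum_{\y \sim \x} u(\y) - u(\x)$. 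On $\T$ we have $0 \le \eta_\infty < 1$ and $\eta_0 = h$ off the origin, so $\Delta u = \eta_\infty - h$ satisfies $-h \le \Delta u < 1-h$ on $\T\setminus\0$, while at the origin $\Delta u(\0) = \eta_\infty(\0) - n$. This casts $u$ as an approximate solution of a discrete obstacle/Poisson problem, exactly as in the abelian and rotor-router settings, and the abelianness used in \cite{lionel} is \emph{not} needed here because the divergence identity holds for any stabilizing splitting order — mass is conserved regardless of order.

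For the \textbf{inner bound}, I would fix $r = (n/\omega_d)^{1/d}$ and compare $u$ to the function $\gamma(\x) = $ (a multiple of) the expected number of visits to $\x$ of a random walk started at $\0$ and killed on exiting $\mathbf B_\rho$, i.e. a truncated Green's function, choosing $\rho \approx c_1 r$ so that the total mass $n$ placed at the origin is just enough to raise every site of $\mathbf B_\rho$ to level $\ge 1$ against the background $h$: the "budget" computation is $n \approx (1-h)\cdot|\mathbf B_\rho \cap \Z^d| \approx (1-h)\omega_d \rho^d$, giving $\rho = (1-h)^{-1/d} r = c_1 r$. One then argues that if some site of $\mathbf B_{c_1 r - c_2}$ were never split, the mass emitted from the origin would be forced (by a maximum-principle / superharmonic comparison applied to $u - c\,\gamma$ on the region where $u>0$) to be absorbable within a strictly smaller region than the budget allows — a contradiction. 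The Green's function asymptotics $g(\x) \sim a_d - b_d|\x|^{2-d}$ (or the logarithmic analog for $d=2$) supply the constant $c_2$ depending only on $d$; this part works for \emph{all} $h<1$ since only $\Delta u \le 1-h$ and conservation of the mass $n$ are used.

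For the \textbf{outer bound}, now crucially $h<0$: this forces $\Delta u \ge -h > 0$ on $\T\setminus\0$, i.e.\ $u$ is \emph{strictly subharmonic} away from the origin with a definite lower bound on its Laplacian. I would bound $u$ from above by solving the corresponding Poisson problem — $u \le$ (Green's function contribution of the point source $n$ at $\0$) $- $ (the subharmonic correction), i.e.\ $u(\x)+ \big(h + (\tfrac12 - \epsilon)\big)\cdot$(something)... more precisely, since $\eta_\infty \ge \tfrac12 - \epsilon$ on all but a negligible fraction of $\T$ (one needs the elementary observation that a stable site reached by mass cannot be much below $\tfrac12$ "on average", which is where the $\tfrac12$ enters and where $h<0$ is used to make $\tfrac12 - \epsilon - h > 0$), we get $\Delta u \ge \tfrac12 - \epsilon - h$ on a bulk subset of $\T$. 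Comparing with the radial subsolution of $\Delta v = \tfrac12 - \epsilon - h$ on a ball and using that $v$ must stay nonnegative on $\T$ (as $u\ge 0$), the radius of $\T$ is forced to be at most $(\tfrac12 - \epsilon - h)^{-1/d} r + c_2'$, by the same volume/budget balance run in reverse. The constant $c_2'$ collects the $O(1)$ discrepancies between discrete and continuous Green's functions, the error from the $\epsilon$-fraction of low sites, and boundary-layer terms, so it depends on $\epsilon, h, d$ only.

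The \textbf{main obstacle} is the outer bound, specifically justifying the uniform lower bound $\eta_\infty \gtrsim \tfrac12$ on the bulk of $\T$ without abelianness: in \cite{lionel} and \cite{shapes} one can reorder topplings to a convenient (e.g.\ layer-by-layer) order, but here the configuration genuinely depends on the splitting order, so one must extract the $\tfrac12$-lower-bound directly from the splitting dynamics. The right tool is a \emph{local} mass-accounting argument: if a site $\x \in \T$ ended with mass $< \tfrac12 - \epsilon$, then on its last split it emitted mass $\ge 1$, but by conservation the mass it later re-received from neighbors plus its residue is $\eta_\infty(\x)$, and one bounds the "lost" mass by a sum over the boundary $\partial\T$ — this is where one pays the $\epsilon$ and gets a correction term absorbed into $c_2'$. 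Making this accounting rigorous and order-independent, and checking the $d=2$ logarithmic Green's function case separately, are the technical heart of the proof; the rest is the standard discrete-potential-theory comparison already present in \cite{lionel}.
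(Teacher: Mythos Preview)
Your inner-bound sketch matches the paper's argument closely: define the odometer $u$, verify the divergence identity $\Delta u = \eta_\infty - \eta_0$, compare with $(1-h)|x|^2 + (n-h)g(x)$, and invoke the superharmonic maximum principle together with the Green's-function estimates from \cite{lionel}. No issue there.

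For the outer bound, your strategy is right in outline but misidentifies the real obstacle. You locate the difficulty in justifying $\eta_\infty \gtrsim \tfrac12$ on the bulk without abelianness. The paper handles this differently and more cleanly than your ``local mass-accounting'' sketch: it does \emph{not} prove a pointwise lower bound on $\eta_\infty$, but instead box-averages the odometer, setting $u^{(k)}(x) = (2k+1)^{-d}\sum_{y\in Q_k(x)} u(y)$, and uses the internal-edge-count inequality (the same one behind the robust bound $|\T|\le n/(\tfrac12-h)$) to obtain $\Delta u^{(k)} \ge \tfrac{k}{2k+1} - h$ directly on $\T^{(k)}$. Choosing $k$ large makes $\tfrac{k}{2k+1} \ge \tfrac12 - \epsilon$, and one runs the subharmonic comparison on $u^{(k)}$ rather than on $u$. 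A separate short lemma bounds $u$ uniformly on $\partial\T^{(k)}$.

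The more serious gap is the step you gloss over with ``using that $v$ must stay nonnegative on $\T$, the radius of $\T$ is forced to be at most \ldots''. The subharmonic comparison only yields a bound on $u$ (or $u^{(k)}$) on the sphere $|x|\approx c_1'r$; it does \emph{not} by itself bound the radius of $\T$. In \cite{lionel} this gap is closed by Lemma~4.2, which uses that the odometer is integer-valued so that any strict increase is at least $1$. That fails completely here because mass is continuous. The paper's replacement is a path lemma: from any $x_0\in\T$ adjacent to $\partial\T$ one constructs a nearest-neighbor path $x_0\sim x_1\sim\cdots\sim 0$ along which $u(x_{k+1}) > u(x_k) - \tfrac{2d}{2d-1}h$, so $u$ increases by the fixed positive amount $-\tfrac{2d}{2d-1}h$ at each step. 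This is precisely where $h<0$ is indispensable, and it converts the bound $u\le \tilde C$ on the sphere of radius $c_1'r$ into the distance bound $c_2' = -\tfrac{(2d-1)\tilde C}{2dh}$. Your proposal supplies no substitute for this step, and without it the outer bound does not close.
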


\section{Limiting shapes in the explosive regime}
\label{explosionssection}

\begin{figure}
\centering
\includegraphics[width=.315\textwidth]{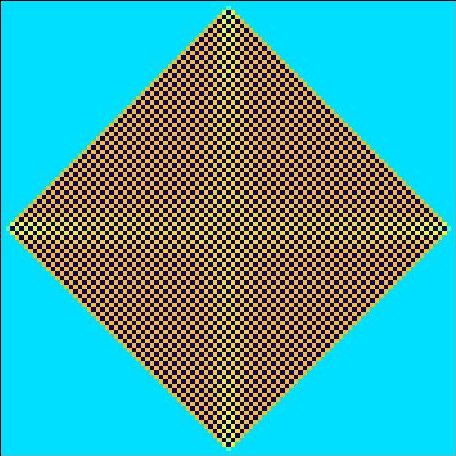} \hspace{0.7mm}
\includegraphics[width=.315\textwidth]{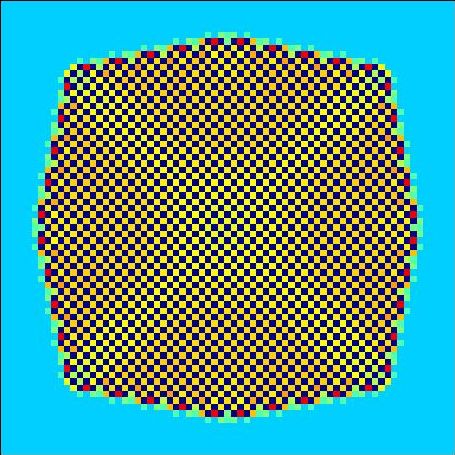} \hspace{0.7mm}
\includegraphics[width=.315\textwidth]{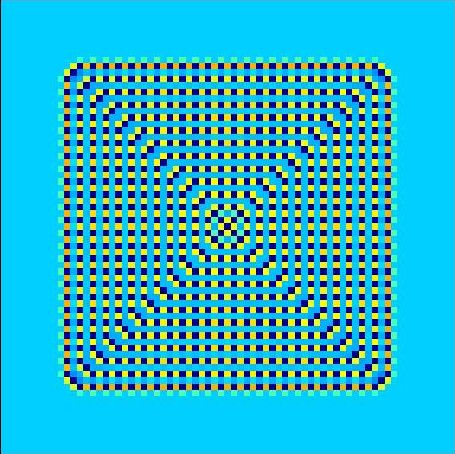}
\caption{The splitting automaton after 50 time steps, with $n=3$ and $h=\frac 34 = 0.75$ (diamond), $h= \frac{23}{32} \approx 0.72$ (octagon), and $h = \frac{359}{512} \approx 0.701$ (square).}
\label{explosions}
\end{figure}

In this section, we will prove Theorem \ref{differentshapes}. Each part of this theorem is stated for $h$ in a certain interval, and the first two parts for $n$ in a certain interval. That means the theorem is stated for uncountably many possible initial configurations. However, we will show that we do not need to know all the exact masses to determine $\T_{t}$ for a certain $t$. For each part of the theorem, we will introduce a labeling of sites, using only a finite number of labels. It will suffice to know the labels of all sites at time $t$, to determine $\T_{t'}$ for all $t' \geq t$.
 
We will see in each case that the time evolution in terms of the labels is a lot more enlightening than in terms of the full information contained in $\eta_t$. In each case, we can identify a certain recurrent pattern of the labels. Our limiting shape proofs will be by induction in $t$, making use of these recurrent patterns. 

The label of a site at time $t$ will depend on its own label at time $t-1$ plus those of its neighbors at time $t-1$. We will specify the labels at $t=0$, and the transition rules for the labels. In other words, for each part of the theorem we will introduce a finite state space cellular automaton, that describes the splitting automaton for certain intervals of $h$ and $n$ in terms of the labels.

A cellular automaton is defined by giving its state space $\mathcal{S}$, its initial configuration $\xi_0$, and its transition rules. By $\xi_t(\x)$, we denote the label of site $\x$ at time $t$. The state space will consist of a finite number of labels. The transition rules specify how the label of each cell changes as a function of its own current label and those of its neighbors. A cellular automaton evolves from the initial configuration in discrete time; each time step, all cells are updated in parallel according to the transition rules. 

We use the following notation for the transition rules. Let $s'$ and $s,s_1,s_2,\ldots$ denote labels in $\mathcal{S}$ (not necessarily all different). By
$$
s \oplus  s_1,s_2,\ldots,s_{2d} \longrightarrow s',
$$
we mean that if a cell has label $s$, and there is a permutation of the labels of its $2d$ neighbors equal to $\{s_1,s_2,\ldots,s_{2d}\}$, then the label of this cell changes to $s'$. By $*$, we will denote an arbitrary label. For example, if we have a transition rule
$$
s \oplus  s_1,s_1,*,\ldots,* \longrightarrow s',
$$
then the label of a cell with label $s$ will change into $s'$ if  at least two of its neighbors have label $s_1$, irrespective of the labels of the other neighbors.

We first give the proof for part 1, which is the simplest case. In fact, in this case the splitting model is equivalent to (1,d) bootstrap percolation: $\T_{t+1}$ is the union of $\T_{t}$ with all sites that have at least one neighbor in $\T_{t}$. The proof we give below will seem somewhat elaborate for such a simple case. That is because we use this case to illustrate our method of labels and cellular automata.

We will need the following observation, which can be proved by induction in $t$:

\begin{lemma}
We call $\x$ an odd site if $\sum_i x_i$ is odd, otherwise we call $\x$ an even site.
Then in the splitting automaton, even sites only split at even times, and odd sites only at odd times.
\label{checkerboardlemma}
\end{lemma}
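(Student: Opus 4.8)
The plan is to prove Lemma \ref{checkerboardlemma} by induction on $t$, tracking the parity of sites together with the parity of the time at which they can carry mass $\geq 1$. More precisely, I would prove the slightly stronger statement that for every $t$, every odd site is stable at even times and every even site is stable at odd times, in fact that $\eta_t(\x)$ is unchanged from $\eta_0$ on the whole "wrong-parity" sublattice at each step. Actually the cleanest invariant to carry through the induction is: \emph{at time $t$, if $\x$ has $\sum_i x_i \not\equiv t \pmod 2$, then $\x$ has not split at any time $t' \le t$, and moreover $\eta_t(\x) = h$ if $\x\neq\0$ (resp.\ $\eta_t(\0)=n$ if $\0$ is of the wrong parity at time $t$)} — but since only the splitting behaviour matters for the statement, it suffices to carry the weaker invariant that wrong-parity sites are stable.

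First I would check the base case $t=0$: the only site with mass $\geq 1$ is the origin $\0$, for which $\sum_i x_i = 0$ is even, so $\mathcal{S}_1 \subseteq \mathcal{U}_0 \subseteq \{\0\}$, and indeed only an even site can split at time $1$ — wait, time $1$ is odd, so I should phrase the base case and the induction step around the key structural fact rather than index juggling. The real content is the following one-step claim: if at time $t$ every site that splits (i.e.\ every site in $\mathcal{S}_{t+1}$, which for the automaton equals $\mathcal{U}_t$) has the same parity $\pi$, then at time $t+1$ every unstable site has parity $1-\pi$. This is because the update rule moves mass only between neighbouring sites, which always have opposite parity: if $\x$ has parity $1-\pi$, then none of its neighbours split at time $t$ having the wrong parity... hmm, let me restate. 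The induction hypothesis I want is: \emph{for all $s \le t$, $\mathcal{S}_s$ consists only of sites $\x$ with $\sum_i x_i \equiv s \pmod 2$.} Equivalently $\mathcal{U}_{t}\subseteq\{\x:\sum_i x_i\equiv t\pmod 2\}$ for the splitting automaton.

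The key step, then, is: from $\mathcal{U}_t \subseteq \{\x : \sum x_i \equiv t\}$, deduce $\mathcal{U}_{t+1}\subseteq\{\x : \sum x_i \equiv t+1\}$. Suppose $\x \in \mathcal{U}_{t+1}$, i.e.\ $\eta_{t+1}(\x) \geq 1$. From the update formula, $\eta_{t+1}(\x) = \eta_t(\x)(1-\ind_{\x\in\mathcal{S}_{t+1}}) + \frac{1}{2d}\sum_{\y\sim\x,\ \y\in\mathcal{S}_{t+1}}\eta_t(\y)$. If $\x$ has parity $\equiv t$, then $\x$ could be in $\mathcal{S}_{t+1}=\mathcal{U}_t$, in which case the first term vanishes; and every neighbour $\y\sim\x$ has parity $\equiv t+1$, hence $\y\notin\mathcal{U}_t=\mathcal{S}_{t+1}$, so the sum is empty and $\eta_{t+1}(\x)=0 < 1$. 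Contradiction. Therefore $\x$ has parity $\equiv t+1$, proving the inductive step; the base case $t=0$ is immediate since $\mathcal{U}_0=\{\0\}$ (for $n\geq 1$) and $0$ is even. The conclusion of the lemma follows: a site $\x$ with $\sum_i x_i$ even can only be in some $\mathcal{S}_s$ with $s$ even, and similarly for odd.

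I do not anticipate a genuine obstacle here — the lemma is essentially a parity-conservation / bipartiteness observation for $\Z^d$, and the only care needed is bookkeeping the induction on $\mathcal{U}_t$ rather than on $\T_t$, and noting that it is special to the \emph{parallel} splitting order (the splitting automaton), since for other orders $\mathcal{S}_{t+1}$ is a proper subset of $\mathcal{U}_t$ and mass can accumulate on a wrong-parity site across several steps. The one point to state carefully is that the argument uses $\mathcal{S}_{t+1}=\mathcal{U}_t$ exactly, so that a wrong-parity site receives \emph{nothing} at the step in question; this is where parallelism is essential.
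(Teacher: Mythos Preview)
Your approach is exactly what the paper indicates (it merely says the lemma ``can be proved by induction in $t$'' and gives no further details), and your invariant $\mathcal{U}_t\subseteq\{\x:\sum_i x_i\equiv t\pmod 2\}$ is the right one. One small gap in the inductive step: when you assume $\x$ has parity $\equiv t$ and derive a contradiction, you only treat the subcase $\x\in\mathcal{U}_t$; you should also dispatch the subcase $\x\notin\mathcal{U}_t$, which is even easier (the neighbour sum is still empty for the same parity reason, so $\eta_{t+1}(\x)=\eta_t(\x)<1$).
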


\noindent{\it Proof of Theorem \ref{differentshapes}, part 1}

We begin by defining the diamond cellular automaton. 

\begin{definition}~

The {\em diamond cellular automaton} has state space $\{e,\hbar,u\}$. We additionally use the symbol $s$ to denote $e$ or $\hbar$.
In the initial configuration, every cell has label $\hbar$, only the origin has label $u$.
The transition rules are:

\begin{enumerate}
\item
$\hbar \oplus  s,\ldots,s \longrightarrow \hbar$
\item
$s \oplus   u,\ldots,u \longrightarrow u$
\item
$u \oplus   s,\ldots,s \longrightarrow e$
\item
$\hbar \oplus   u,*,\ldots,* \longrightarrow u$
\end{enumerate}

\end{definition}

The above set of transition rules is sufficient to define the diamond cellular automaton, because, as we will demonstrate below, other combinations of 
cell and neighborhood labels do not occur.

Let $\G_t$, the growth cluster of the cellular automaton, be the set of all cells that do not have label $\hbar$ at time $t$. We will first prove the limiting shape result for $\G_t$, and then demonstrate that if $1-\frac 1{2d} \leq h <1$, then $\G_t$ is the same set as $\T_{t} \cup \partial \T_{t}$ for every $t$. 

Let $\mathcal{D}_r$ be the diamond $\{\x \in \mathbb{Z}^d: \sum_i |x_i| \leq r\}$.
To prove the limiting shape result, we will show by induction in $t$ that $\G_t = \mathcal{D}_t$, so that the limiting shape of $\G_t$ is $\mathbf{D}$, with scaling function $f(t) = \frac 1t$.

Our induction hypothesis is that at time $t$, $\xi_t$ is as follows (see figure \ref{diamondindhyp}): all sites $x \in \mathcal{D}_{t}$ have label $u$
if $(\sum_i x_i -t) \mod 2 = 0$, and label $e$ otherwise. All other sites have label $\hbar$. If this claim is true for all $t$, then we have 
$\G_{t} = \mathcal{D}_t$.

\begin{figure}[h]
\begin{center}
\begin{tabular}{c|c|c|c|c}
 & & u &  &  \\
\hline
 & u & e & u &  \\
\hline
u & e & u & e & u \\
\hline
 & u & e & u &  \\
\hline
 & & u &  &  \\
\end{tabular}
\end{center}
\caption{The induction hypothesis for the diamond cellular automaton at $t=2$. Labels not shown are $\hbar$.}
\label{diamondindhyp}
\end{figure}

As a starting point, we take $t=0$. At that time, the origin has label $u$, and all other cells have label $\hbar$. Therefore, the hypothesis is true at $t=0$.

Now suppose the hypothesis is true at time $t$. Then all sites with label $e$ have $2d$ neighbors with label $u$, 
therefore by the second transition rule they will have label $u$ at time $t+1$. All sites with label $u$ have $2d$ neighbors with label $e$ or $\hbar$, 
therefore by the third rule they will have label $e$ at time $t+1$. All sites in $\mathcal{D}_{t+1} \setminus \mathcal{D}_{t}$ have label $\hbar$ and 
at least one neighbor with label $u$. Therefore, by the fourth rule they will have label $u$ at $t+1$. Other labels do not change, by the first rule. 
This concludes the induction, and moreover shows that our set of transition rules is sufficient to define the diamond cellular automaton.

Finally, we show that if $1-\frac 1{2d} \leq h <1$, then $\G_t$ is the same set as $\T_{t} \cup \partial \T_{t}$. To compare the configurations $\eta_t$ and $\xi_t$, we give a mapping 
$$
\mathcal{M}_d:\{e,\hbar,u,s\} \to \I,
$$ 
where $\I$ is the set of intervals $\{[a,b): a \leq b, ~ a,b \in [0,\infty]\}$,
that maps the state space of the diamond cellular automaton to the mass values of the splitting automaton:

\begin{eqnarray*}
\mathcal{M}_d(e) & = & 0\\
\mathcal{M}_d(\hbar) & = & h\\
\mathcal{M}_d(s) & = & [0,1)\\
\mathcal{M}_d(u) & = & [1,\infty)\\
\end{eqnarray*}

For a fixed diamond cellular automaton configuration $\xi$, define 
$$
\mathcal{M}_d^\xi =\{\eta: \eta(\x)\in\mathcal{M}_d(\xi(\x)), \textrm{ for all } \x \in \mathbb{Z}^d\}.
$$

With this mapping, the initial configuration of the splitting automaton $\eta^h_n$ is in $\mathcal{M}_d^{\xi_0}$, for all $n \geq1$ and $1-\frac 1{2d} \leq h <1$. We will show by induction in $t$ that $\eta_t$ is in $\mathcal{M}_d^{\xi_t}$ for all $t$. 
Suppose that at time $t$, $\eta_t$ is in $\mathcal{M}_d^{\xi_t}$. 

We check one by one the transition rules:
\begin{itemize}
\item{(rule 1)} If in the splitting automaton a site has mass $h$, and none of its neighbors split, then its mass does not change. 
This is true for all $h<1$.
\item{(rule 2)} If in the splitting automaton a stable site has $2d$ unstable neighbors, then its receives at least $2d$ times $\frac 1{2d}$, 
therefore its mass will become at least 1. This is true for all $h$.
\item{(rule 3)} If in the splitting automaton a site has mass at least 1, then it splits. If none of its neighbors splits, then its mass will become 0. 
This is true for all $h$.
\item{(rule 4)} If in the splitting automaton a site with mass $h$ has at least one neighbor that splits, it receives at least $\frac 1{2d}$. Therefore, it will become unstable only if $1-\frac 1{2d} \leq h <1$.
\end{itemize}

Therefore, if $\eta_t$ is in $\M_d^{\xi_t}$, $1-\frac 1{2d} \leq h <1$ and $n \geq 1$, then $\eta_{t+1}$ is in $\M_d^{\xi_{t+1}}$. This completes the induction.

Finally, by the following observations:
\begin{itemize}
\item only the label $u$ maps to mass 1 or larger, so a site is in $\T_{t}$ if and only if it has had label $u$ at least once before $t$,
\item the label of a site changes into another label if and only if at least one neighbor has label $u$,
\item if a site does not have label $\hbar$ at time $t$, then it cannot get label $\hbar$ at any time $t' \geq t$, 
\end{itemize}
we can conclude that $\G_t$ of the diamond cellular automaton is the same set as $\T_{t} \cup \partial \T_{t}$ for the splitting automaton with $1-\frac 1{2d} \leq h <1$ and $n \geq 1$.
\qed

\medskip

We now give the proofs of the remaining two parts; note that in these next two proofs, we are in dimension 2. We will need more elaborate cellular automata, in which there are several labels for unstable sites. For example, it is important to know whether the mass of a splitting site is below or above $4(1-h)$: if its neighbor has mass $h$ then in the first case it might not become unstable, but in the second case, it will. 

\medskip

\noindent{\it Proof of Theorem \ref{differentshapes}, part 2}

We begin by defining the square cellular automata. 

\begin{definition}~

The {\em square cellular automaton} has state space $\{e,\hbar,p,m,m',c,d\}$. We additionally use the symbol $s$ to denote a label that is $e$, $\hbar$ or $p$.
In the initial configuration, every cell has label $\hbar$, only the origin has label $d$. 
The transition rules are:

\begin{multicols}{2}
\begin{enumerate}
\item
$\hbar \oplus   s,s,s,s \longrightarrow \hbar$
\item
$p \oplus   s,s,s,s \longrightarrow p$
\item
$c \oplus   *,*,*,* \longrightarrow c$
\item
$m \oplus   *,*,*,* \longrightarrow e$
\item
$d \oplus   *,*,*,* \longrightarrow c$
\item
$m' \oplus  *,*,*,* \longrightarrow c$
\item
$\hbar \oplus   d,s,s,s \longrightarrow m$
\item
$\hbar \oplus   m,s,s,s \longrightarrow p$
\item
$p \oplus   m,m,m',s \longrightarrow d$
\item
$\hbar \oplus   d,m,s,s \longrightarrow d$
\item
$\hbar \oplus   m,m,s,s \longrightarrow d$
\item
$e \oplus   d,d,c,p \longrightarrow m'$
\end{enumerate}
\end{multicols}

\end{definition}

The above set of transition rules is sufficient to define the cellular automaton, because, as we will demonstrate below, other combinations of cell 
and neighborhood labels do not occur.

Recall that the growth cluster $\G_t$ is the set of all cells that do not have label $\hbar$ at time $t$.
To prove the limiting shape result for the growth cluster of the square cellular automaton, we use induction. Let $\mathcal{C}_r \in \Z^2$ be the square $\{(i,j):|i| \leq r, |j| \leq r\}$.
Let $\zeta_r$ be the following configuration (see Figure \ref{squareindhyp}): 
\begin{itemize}
\item All sites in $\mathcal{C}_{r-1}$ have label $c$. 
\item The labels in $\mathcal{C}_{r}\setminus \mathcal{C}_{r-1}$ are $d$, if $(i-j) \mod 2 = 0$, and $e$ otherwise.
\item The labels outside $\mathcal{C}_{r}$ are $p$ if they have a neighbor with label $e$, and $\hbar$ otherwise.
\end{itemize}

\begin{figure}[h]
\begin{center}
\begin{tabular}{c|c|c|c|c|c|c}
 & & p &  & p & & \\
\hline
 & d & e & d & e & d & \\
\hline
p & e & c & c & c & e & p \\
\hline
 & d & c & c & c & d &  \\
\hline
p & e & c & c & c & e & p \\
\hline
 & d & e & d & e & d & \\
 \hline
  & & p &  & p & & \\
\end{tabular}
\end{center}
\caption{The configuration $\zeta_r$, used in the induction hypothesis for the square cellular automaton, for $r=2$. Labels not shown are $\hbar$.}
\label{squareindhyp}
\end{figure}

Our induction hypothesis is that for every even $t$, $\xi_t = \zeta_{t/2}$.
The initial configuration $\xi_0$ of the square cellular automaton is $\zeta_0$.
Now suppose that at some even time $t$, $\xi_t = \zeta_{t/2}$. Then by using the transition rules, one can check that at time $\xi_{t+2}$ will 
be $\zeta_{t/2+1} = \zeta_{(t+2)/2}$. This completes the induction.

The shape result now follows: for every $t$, $\mathcal{C}_{t/2} \subseteq \G_t \subseteq \mathcal{C}_{t/2 +1}$, so that the limiting shape of $\G_t$ is the square $\mathbf{Q}$, with scaling function $f(t) = \frac 2t$.

\medskip

Finally, to show that $\G_t$ for the square cellular automaton is equal to $\T_{t} \cup \partial \T_{t}$ for the splitting automaton with $4-4h \leq n \leq 16-20h$ and $ 7/10 \leq h < 40/57$, we give a mapping 
$$
\mathcal{M}_s:\{e,\hbar,p,m,m',c,d,s\} \to \I,
$$ 
that maps the state space of the square cellular automaton to the mass values of the splitting automaton:

\begin{eqnarray*}
\mathcal{M}_s(e) & = &0\\
\mathcal{M}_s(\hbar) & = & h\\
\mathcal{M}_s(s) & = & [0,1)\\
\mathcal{M}_s(p) & = & [h+\frac 14,1)\\
\mathcal{M}_s(m) & = & [1,4-4h)\\
\mathcal{M}_s(m') & = & [0,12-15h)\\
\mathcal{M}_s(c) & = & [0,16-20h)\\
\mathcal{M}_s(d) & = & [4-4h,16-20h)\\
\end{eqnarray*}
For all $h<3/4$, these intervals are nonempty, moreover, $4-4h > 1$. 

With this mapping, one may check that $\eta^h_n$ is in $\mathcal{M}_s^{\xi_0}$. By induction in $t$, we will show that $\eta_t$ is in $\mathcal{M}_s^{\xi_t}$ for all $t$.  
Suppose at time $t$, $\eta_{t}$ is in $\M_s^{\xi_{t}}$.

We check one by one the transition rules:

\begin{itemize}
\item{(rule 1)} If in the splitting automaton a site has mass $h$, and none of its neighbors split, then its mass does not change. This is true for 
all $h<1$.
\item({rules 2-5)} If an unstable site splits, then by Lemma \ref{checkerboardlemma}, its neighbors do not split. Therefore, it will become empty. 
If a cell has split at least once, then from that time on it cannot receive sand from more than 4 neighbors before splitting itself. Therefore, 
no cell that split at least once can gain mass greater than $16-20h$. This is true for all $h<1$.
\item{(rule 6)} $h + \frac 14 [4-4h,16-20h) \rightarrow [1,4-4h)$. This is true for all $h<1$.
\item{(rule 7)} $h + \frac 14 [1,4-4h) \rightarrow [h+1/4,1)$. This is true for all $h<1$.

\item{(rule 8)} $[h+\frac 14, 1) + \frac 12 [1,4-4h) + \frac 14[0,12-15h) \rightarrow [h+\frac 34,6-\frac{23h}4)$.
We have that $[h+\frac 34,6-\frac{23h}4) \subseteq [4-4h,16-20h)$ if $13/20 \leq h \leq 40/57$.
\item{(rule 9)} $h + \frac 12 [1,4-4h) \rightarrow [h+\frac 12,2-h)$.
We have that $[h+\frac 12,2-h) \subseteq [4-4h,16-20h)$ if $7/10 \leq h \leq 14/19$.
\item{(rule 10)} $h + \frac 14 [4-4h,16-20h) + \frac 14[1,4-4h) \rightarrow [\frac 54,5-5h)$.
We have that $[\frac 54,5-5h) \subseteq [4-4h,16-20h)$ if $11/16 \leq h \leq 11/15$.
\item{(rule 11)} $ \frac 12 [4-4h,16-20h) + \frac 14 [0,16-20h) \rightarrow [2-2h,12-15h)$.
We have that $[2-2h,12-15h) \subseteq [0,12-15h)$ if $h < 1$.
\end{itemize}

Therefore, if $\eta_{t}$ is in $\M_s^{\xi_t}$, $4-4h \leq n \leq 16-20h$ and $7/10 \leq h \leq 40/57$, then $\eta_{t+1}$ is in $\M_s^{\xi_{t+1}}$. This completes the induction.

Finally, by the following observations:
\begin{itemize}
\item the labels $m$ and $d$ map to an interval in $[1,\infty)$, so a site is in $\T_{t}$ if it has had label $m$ or $d$ at least once before $t$,
\item the label of a site with label $\hbar$ changes into another label if and only if at least one neighbor has label $m$ or $d$,
\item if a site does not have label $\hbar$ at time $t$, then it cannot get label $\hbar$ at any time $t' \geq t$, 
\end{itemize}
we can conclude that if $4-4h \leq n \leq 16-20h$ and $7/10 \leq h \leq 40/57$, then $\G_t$ for the square cellular automaton is the same set as $\T_{t} \cup \partial \T_{t}$.
\qed

\medskip 

For the final part, we first perform 8 time steps in the splitting automaton before we describe its further evolution as a finite state space cellular automaton. Otherwise, we would need many more labels and transition rules.

\medskip

\noindent{\it Proof of Theorem \ref{differentshapes}, part 3}

We begin by defining the octagon cellular automaton.

\begin{definition}~

The {\em octagon cellular automaton} has state space $\{e,\hbar,p,m,d,d',d!,c,c',q,q'\}$. We additionally use the symbol $s$ to denote a label 
that is $e$, $\hbar$ or $p$, and the symbol $u$ to denote a label that is any of the other.

The initial configuration is given in the table below. We show only the first quadrant (left bottom cell is the origin). The rest follows by symmetry. All labels not shown are $\hbar$.

\medskip
\begin{center}
\begin{tabular}{|c|c|c|c|c|c|c|c}
p & &  &  &  &  &  &\\
\hline
e & d! & p & m &  &  & & \\
\hline
$c'$ & e & c & e & $d'$ &  & & \\
\hline
e & c & e & c & e & m & & \\
\hline
c & e & c & e & c & p &  &\\
\hline
e & c & e & c & e & d! &  &\\
\hline
c & e & c & e & $c'$ & e &p  &\\
\hline
\end{tabular}
\end{center}

\medskip

The transition rules are:

\begin{multicols}{2}
\begin{enumerate}
\item
$\hbar \oplus  s,s,s,s \longrightarrow \hbar$
\item
$u \oplus  *,*,*,* \longrightarrow e$
\item
$\hbar \oplus   m,s,s,s \longrightarrow p$
\item
$\hbar \oplus   d,s,s,s \longrightarrow m$
\item
$\hbar \oplus   d',s,s,s \longrightarrow m$
\item
$\hbar \oplus   d!,s,s,s \longrightarrow m$
\item
$\hbar \oplus   q,s,s,s \longrightarrow d$
\item
$\hbar \oplus   q,m,s,s \longrightarrow d!$
\item
$\hbar \oplus   q',m,s,s \longrightarrow d!$
\item
$\hbar \oplus   m,d',s,s \longrightarrow d'$
\item
$\hbar \oplus   d',d',s,s \longrightarrow d'$
\item
$p \oplus   d!,m,c,s \longrightarrow q'$
\item
$p \oplus   m,m,c,s \longrightarrow q$
\item
$p \oplus   d,m,c,s \longrightarrow q$
\item
$e \oplus   q',c,d',s \longrightarrow c$
\item
$e \oplus   d!,d!,c',s \longrightarrow c$
\item
$e \oplus   q,c,d!,s \longrightarrow c$
\item
$e \oplus   q',c,d!,s \longrightarrow c$
\item
$e \oplus   u,u,u,u \longrightarrow c$ \\
\mbox except in the following case:
\item
$e \oplus   m,q,q,c \longrightarrow c'$
\end{enumerate}
\end{multicols}
\end{definition}

Our proof that the growth cluster of the octagon cellular automaton has $\mathbf{O}$ as limiting shape, is by induction. We will show that there is a pattern that repeats every 10 time steps. To describe this pattern, we introduce two subconfigurations that we call `tile' and `cornerstone'. They are given in Figure \ref{tiles}. We say the subconfiguration is at position $(x,y)$, if the left bottom cell has coordinates $(x,y)$.

%\clearpage

\begin{figure}[h]
\begin{multicols}{2}
\begin{center}

\begin{tabular}{|c|c|c|c|c|}
\hline
  $p$ & $\hbar$ & $\hbar$ & $\hbar$ & $\hbar$ \\
   \hline
 $e$ & $d!$ & $p$ & $m$ & $\hbar$ \\
   \hline
 $c$ & $e$ & $c$ & $e$ & $q'$ \\
 \hline
  \end{tabular}

\begin{tabular}{|c|c|c|c|}
\hline
  $p$ & $\hbar$ & $\hbar$ & $\hbar$  \\
 \hline
 $e$ & $d'$ & $\hbar$ & $\hbar$ \\
 \hline 
 $c$ & $e$ & $d'$ & $\hbar$ \\
 \hline
 $e$ & $c$ & $e$ & $p$ \\
  \hline
\end{tabular}

\end{center}
\end{multicols}
  
\caption{A `tile' (left), and a `cornerstone' (right).}
\label{tiles}
\end{figure}

To specify a configuration, we will only specify the cells $\x = (x,y)$ with $y \geq x \geq 0$; the rest follows by symmetry.

\begin{definition}
We define the configuration $\chi^i$, with $i \in \{0,1,\ldots \}$, as follows:
\begin{itemize}
\item there is a cornerstone at position $(5(i+1),5(i+1))$,
\item for every $j = 0,\ldots,i$, there is a tile at position $(5(i-j),7+5i+j)$,
\item the leftmost tile is different, namely, $\chi^i(0,7+6i) = c'$,
\item for every cell $(x,y)$ such that it is not in a tile or cornerstone, but every directed path from $(x,y)$ to $(0,0)$ intersects a tile or cornerstone, $\chi^i(x,y) = \hbar$,
\item for every other cell $(x,y)$, $\chi^i(x,y) = e$ if $(x+y)\mod 2 = 0$, and $c$ otherwise.
\end{itemize}
\end{definition}

\begin{figure}[h]
\centering
\includegraphics[width=.31\textwidth]{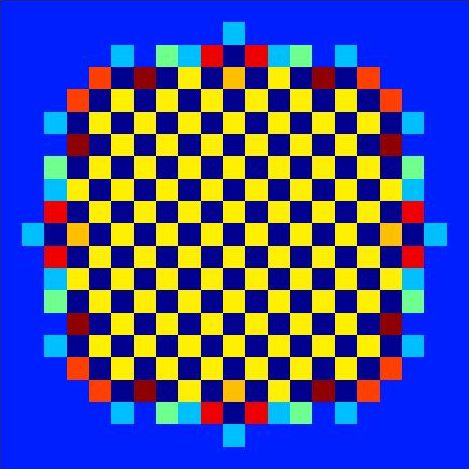} \hspace{0.5mm}
\includegraphics[width=.31\textwidth]{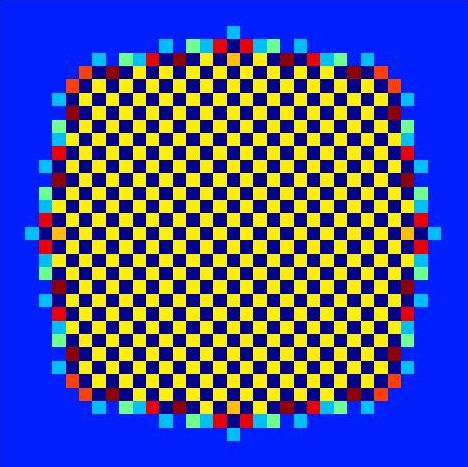} \hspace{0.5mm}
\includegraphics[width=.31\textwidth]{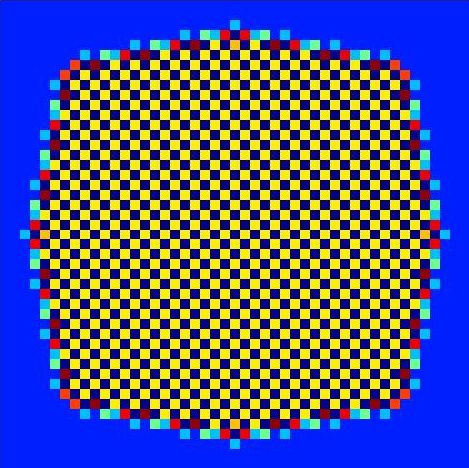} 

\smallskip

\includegraphics[width=.31\textwidth]{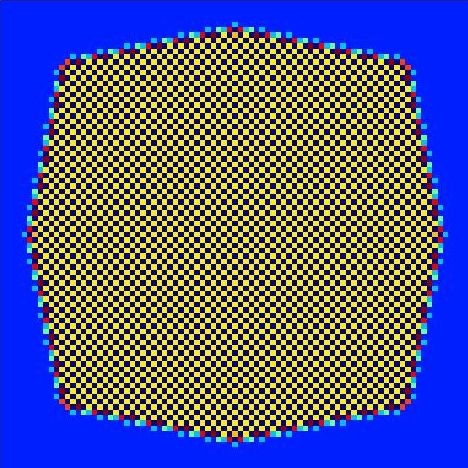} \hspace{0.5mm}
\includegraphics[width=.31\textwidth]{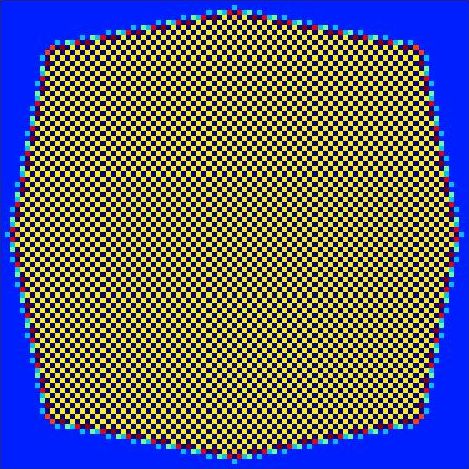} \hspace{0.5mm}
\includegraphics[width=.31\textwidth]{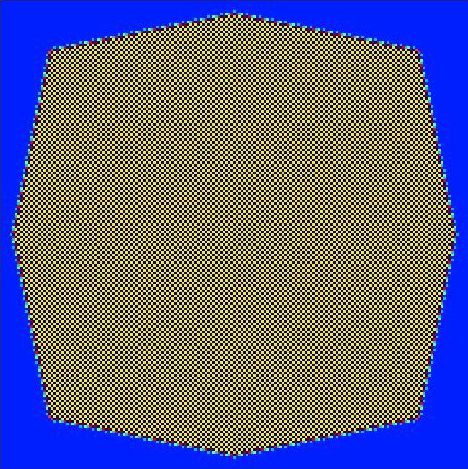}
\caption{The octagon cellular automaton after 5, 15, 25, 55, 65 and 135 time steps, or equivalently, the configurations $\chi^0$, $\chi^1$, $\chi^2$, $\chi^5$, $\chi^6$ and $\chi^{13}$. Black = $e$, yellow = $c$, dark yellow = $c'$, light blue = $p$, dark blue = $\hbar$, light green = $m$, orange = $d'$, red = $d!$, dark red = $q'$.}
\label{octagontimesteps}
\end{figure}

Our induction hypothesis now is: In the octagon cellular automaton, at time $5 + 10i$, $\xi_{5 + 10i} = \chi^i$. In words this says that every 10 time steps, an extra tile is formed. 

The hypothesis can be verified for $i \leq 6$, by performing 65 time steps starting from the initial configuration $\xi_0$. We show the results of this computation in Figure \ref{octagontimesteps}, generated by a computer program of the octagon cellular automaton. 

Suppose now that the hypothesis is true at time $5+10i$, with $i>6$. We will construct $\xi_{5+10(i+1)}$ by performing 10 time steps starting from $\chi^i$. Observe that the label of a cell at time $t+10$ depends only on its own label and that of all cells in $\x + \mathcal{D}_{10}$; we call this set of cells the `10-neighborhood' of $\x$. By the definition of $\chi^i$, we have that for every $i>5$ and for every $\x$, there exists a cell $\y$ such that the labeling of the 10-neighborhood of $\x$ in $\chi^i$ is identical to that of the 10-neighborhood of $\y$ in $\chi^5$. 
Therefore, the label of $\x$ in $\chi^{i+1}$ will be identical to that of $\y$ in $\chi^6$. 
Thus, we can construct $\xi_{5+10(i+1)}$ from $\chi^i$, and we find that indeed, if $\xi_{5 + 10i} = \chi^i$ then $\xi_{5 + 10(i+1)} = \chi^{i+1}$.

Since $\G_t$ is nondecreasing in $t$, we have that for every $t$ there is an $i$ such that $\G_{5+10i} \subseteq \G_t \subseteq \G_{5+10(i+1)}$. 
The radius of $\G_{5+10i}$ is $9 + 6i$. This means that every 10 time steps, the radius increases by 6. We conclude that the limiting shape of $\G_t$ is the octagon $\mathbf{O}$, with scaling function $f(t) = \frac 5{3t}$.

%\medskip
\clearpage
Finally, we prove that $\G_t$ of the octagon cellular automaton is equal to $\T_{t+8} \cup \partial \T_{t+8}$ if $n=3$ and $h \in [5/7, 13/18)$. 

We give a mapping 
$$
\mathcal{M}_o:\{e,\hbar,p,m,d,d',d!,c,c',q,q',s,u\} \to \I,
$$
that maps the state space of the octagon cellular automaton to the mass values of the splitting automaton: (if $1/2 < h <946/1301 \approx 0.727$ then all intervals are nonempty, and moreover, the labels $e$, $\hbar$ and $p$ map to an interval in $[0,1)$, while all other labels map to an interval in $[1,\infty)$):

\begin{eqnarray*}
\mathcal{M}_o(e) & = &0\\
\mathcal{M}_o(\hbar) & = &h\\
\mathcal{M}_o(p) & = &[h + \frac 14 ,1)\\
\mathcal{M}_o(s) & = & [0,1)\\
\mathcal{M}_o(m) & = & [1,4-4h)\\
\mathcal{M}_o(d) & = & [4-4h,16-20h)\\
\mathcal{M}_o(d') & = & [\frac 38 + \frac{5h}4 , 16-20h)\\
\mathcal{M}_o(d!) & = & [\frac 12 + \frac{5h}4 , 16-20h)\\
\mathcal{M}_o(q) & = &[1+h,60-80h)\\
\mathcal{M}_o(q') & = &[\frac{21h}{16} + \frac 78, 60-80h)\\
\mathcal{M}_o(c) & = & [1,60-80h)\\
\mathcal{M}_o(c') & = & [1 + \frac{h}{2}, 60-80h)\\
\mathcal{M}_o(u) & = &[1,60-80h)\\
\end{eqnarray*}

For every $x$, if $n=3$ and $h \in [5/7, 13/18)$ then $\eta_8 \in\mathcal{M}_o^{\xi_0}$. 
This can be verified by tedious, but straightforward inspection: In Figure \ref{initialfigure} we give the configuration at $t=8$ for the splitting automaton with $n=3$ and $h \in [5/7, 13/18)$. 

\begin{figure}[h]
\begin{center}
\scalebox{0.65}{
\begin{tabular}{|c|c|c|c|c|c|c}
111 + 88388$h$ &  &  &  &  & & \\
\hline
0 & 675+128772$h$ & 108+89360$h$ & 81+95692$h$ &  &  & \\
\hline
2610+128408$h$ & 0 & 1350+96824$h$ & 0 & 162+125848$h$ & &  \\
\hline
0 & 4842+116632$h$ & 0 & 1572+112880$h$ & 0 & 81+95692$h$ & \\
\hline
9423+99268$h$ & 0 & 5814+102920$h$ & 0 & 1350+96824$h$   & 108+89360$h$&  \\
\hline
0 & 11700+98608$h$ & 0 & 4842+116632$h$ & 0 & 675+128772$h$ \\
\hline
14592+96512$h$ & 0 & 9423+99268$h$ & 0 & 2610+128408$h$ & 0 & 111 + 88388$h$ \\
\hline
\end{tabular}}
\end{center}
\caption{$\eta_8$ multiplied by $4^8 = 65536$, for the splitting automaton with $n=3$ and $h \in (5/7,13/18)$. Masses not shown are $65536h$.}
\label{initialfigure}
\end{figure}

Next, we will prove by induction in $t$ that $\eta_{t+8}$ is in $\M_o^{\xi_t}$ for all $t$. 
We assume that for some $t$, $\eta_{t+8}$ is in $\M_o^{\xi_t}$. By examining every transition rule, we can then show that as a consequence, $\eta_{t+9}$ is in $\M_o^{\xi_{t+1}}$.

We check one by one the transition rules:

\begin{itemize}
%h \oplus &  \{s,s,s,s\} \longrightarrow h,\\
\item{(rule 1)} If in the splitting automaton a site has mass $h$, and none of its neighbors split, then its mass does not change. This is true for 
all $h<1$.
%u \oplus &  \{\cdot,\cdot,\cdot,\cdot\} \longrightarrow e,\\
\item{(rule 2)} If an unstable site splits, then by Lemma \ref{checkerboardlemma}, its neighbors do not split. Therefore, it will become empty.
%If a cell has toppled at least once, then from that time on it cannot receive sand from more than 4 neighbors before splitting itself. 
%Therefore, no cell that toppled at least once can gain height greater than $60-80h$. This is true for all $h<1$.
%h \oplus &  \{m,s,s,s\} \longrightarrow p,\\
\item{(rule 3)} $h + \frac 14 [1,4-4h) \rightarrow [h+1/4,1)$. This is true for all $h<1$.
%h \oplus &  \{d,s,s,s\} \rightarrow m,\\
\item{(rule 4)} $h + \frac 14 [4-4h,16-20h) \rightarrow [1,4-4h)$. This is true for all $h<1$.
%h \oplus &  \{d',s,s,s\} \rightarrow m,\\
\item{(rule 5)} $h + \frac 14 [\frac 38 + \frac{5h}4,16-20h) \rightarrow [\frac 3{32} + \frac{21h}{16},4-4h)$.
We have that  $[\frac 3{32} + \frac{21h}{16},4-4h) \subseteq [1,4-4h)$ if $1 > h \geq 29/42\approx 06905$.
%h \oplus &  \{d!,s,s,s\} \rightarrow m,\\
\item{(rule 6)} $h + \frac 14 [\frac 12 + \frac{5h}4,16-20h) \rightarrow [\frac 18 + \frac{21h}{16},4-4h)$.
We have that  $[\frac 18 + \frac{21h}{16},4-4h) \subseteq [1,4-4h)$ if $1 > h \geq 2/3$.
%h \oplus &  \{q,s,s,s\} \rightarrow d,\\
\item{(rule 7)} $h + \frac 14 [1+h,60-80h) \rightarrow [\frac{5h}4 + \frac 14,15-19h)$.
We have that $[\frac{5h}4 + \frac 14,15-19h) \subseteq [4-4h,16-20h)$ if $0.7143\approx 5/7 \leq h < 1$.
%h \oplus &  \{q,m,s,s\} \rightarrow d!,\\
\item{(rule 8)} $h + \frac 14 [1+h,60-80h) + \frac 14[1,4-4h) \rightarrow [\frac{5h}4 + \frac 12,16-20h)$. This is true for all $h<1$.
%h \oplus &  \{q',m,s,s\} \rightarrow d!,\\
\item{(rule 9)} $h + \frac 14 [\frac{21h}{16} + \frac 78, 60-80h) + \frac 14[1,4-4h) \rightarrow [\frac{85h}{64} + \frac 7{32},16-20h)$. 
We have that $[\frac{85h}{64} + \frac 7{32},16-20h) \subseteq [\frac{5h}4 + \frac 12,16-20h)$ if $1 > h \geq 5/2$.
%h \oplus &  \{m,d',s,s\} \rightarrow d',\\
\item{(rule 10)} $h + \frac 14 [\frac{5h}4 + \frac 38,16-20h) + \frac 14[1,4-4h) \rightarrow [\frac{21h}{16}+\frac {11}{32},5-5h)$.
We have that $[\frac{21h}{16}+\frac {11}{32},5-5h) \subseteq [\frac{5h}4 + \frac 38,16-20h)$ if $1/2 \leq h \leq 11/15\approx 0.7333$.
%h \oplus &  \{d',d',s,s\} \rightarrow d',\\
\item{(rule 11)} $h + \frac 12 [\frac{5h}4 + \frac 38,16-20h)  \rightarrow [\frac{13h}8 + \frac 3{16},8-9h)$.
We have that $[\frac{13h}8 + \frac 3{16},8-9h) \subseteq [\frac{5h}4 + \frac 38,16-20h)$ if $1/2 \leq h \leq 8/11\approx 0.7273$.
%p \oplus &  \{d!,m,c,s\} \rightarrow q',\\
\item{(rule 12)} $[h+ \frac 14,1) + \frac 14 [\frac{5h}4 + \frac 12,16-20h) + \frac 14 [1,4-4h) +  \frac 14 [1,60-80h) \rightarrow [\frac{21h}{16}+\frac 78,21-26h)$.
We have that $[\frac{21h}{16}+\frac 78,21-26h) \subseteq [\frac{21h}{16}+\frac 78,60-80h)$ if $h \leq 13/18\approx 0.7222$.
%p \oplus &  \{m,m,c,s\} \rightarrow q,\\
\item{(rule 13)} $[h+ \frac 14,1) +  \frac 12 [1,4-4h) +  \frac 14 [1,60-80h) \rightarrow [1+h,18-22h)$.
We have that $[1+h,18-22h) \subseteq [1+h,60-80h)$ if $h \leq 21/29\approx 0.7241$.
%p \oplus &  \{d,m,c,s\} \rightarrow q,\\
\item{(rule 14)} $[h+ \frac 14,1) +  \frac 14 [1,4-4h)  \frac 14 [4-4h,16-20h) +  \frac 14 [1,60-80h) \rightarrow [\frac 74,21-26h)$.
We have that $[\frac 74,21-26h) \subseteq [1+h,60-80h)$ if $h \leq 13/18\approx 0.7222$.
%e \oplus &  \{q',c,d',s\} \rightarrow c,\\
\item{(rule 15)} $ \frac 14 [\frac{21h}{16}+\frac 78,60-80h) +  \frac 14 [1,60-80h)  + \frac 14 [\frac{5h}4 + \frac 38,16-20h) \rightarrow [\frac{41h}{64} + \frac 9{16},34-45h)$.
We have that $[\frac{41h}{64} + \frac 9{16},34-45h) \subseteq [1,60-80h)$ if $0.6829\approx 28/41 \leq h \leq 26/35\approx 0.7429$.
%e \oplus &  \{d!,d!,c',s\} \rightarrow c.\\
\item{(rule 16)} $ \frac 12 [\frac{5h}4 + \frac 12,16-20h)  +  \frac 14 [1+\frac h2,60-80h) \rightarrow [\frac {3h}4 + \frac 12,23-30h)$.
We have that $[\frac {3h}4 + \frac 12,23-30h) \subseteq [1,60-80h)$ if $2/3 \leq h \leq 37/50\approx 0.7400$.
%e \oplus &  \{q,c,d!,s\} \rightarrow c,\\
\item{(rule 17)} $ \frac 14 [1+h,60-80h) +  \frac 14 [1,60-80h)  + \frac 14 [\frac{5h}4 + \frac 12,16-20h) \rightarrow [\frac {9h}{16}+ \frac 58,34-45h)$.
We have that $[\frac {9h}{16}+ \frac 58,34-45h) \subseteq [1,60-80h)$ if $2/3 \leq h \leq 26/35\approx 0.7429$.
%e \oplus &  \{q',c,d!,s\} \rightarrow c,\\
\item{(rule 18)} $ \frac 14 [\frac{21h}{16}+\frac 78,60-80h) +  \frac 14 [1,60-80h)  + \frac 14 [\frac{5h}4 + \frac 12,16-20h) \rightarrow [\frac {41h}{64}+ \frac {19}{32},34-45h)$.
We have that $[\frac {41h}{64}+ \frac {19}{32},34-45h) \subseteq [1,60-80h)$ if $0.6341\approx 26/41 \leq h \leq 26/35 \approx 0.7429$.
%e \oplus & \{u,u,u,u\} \rightarrow c,\\
\item{(rule 19)} If in the splitting automaton a cell is empty, and all its neighbors split, then it gets mass at least 1. Since no cell has mass exceeding $60-80h$, that is the maximum mass that an empty cell can get.
%e \oplus &  \{m,q,q,c\} \rightarrow c',\\
\item{(rule 20)} $ \frac 14 [1,4-4h) +  \frac 12 [1+h,60-80h) +  \frac 14 [1,60-80h) \rightarrow [1+ \frac h2,46-61h)$.
We have that $46-61h \leq 60-80h$ if $h \leq 14/19\approx 0.7368$.
\end{itemize}

In summary, all the rules are valid if $h \in [5/7,13/18]$. Therefore, if $\eta_{t+8}$ is in $\M_o^{\xi_t}$ and $5/7 \leq h \leq 13/18$, then $\eta_{t+9}$ is in $\M_o^{\xi_{t+1}}$. This completes the induction.

Finally, by the following observations:
\begin{itemize}
\item all sites in $\G_0$ are in $\T_{8} \cup \partial \T_{8}$,
\item only labels denoted as $u$ map to values in $[1,\infty)$, so for all $t \geq 8$, a site is in $\T_{t}$ if and only if it is in $\T_{8}$ or it has had a label denoted as $u$ at least once before $t$,
\item the label of a site with label $\hbar$ changes into another label if and only if at least one neighbor has a label denoted as $u$,
\item if a site does not have label $\hbar$ at time $t$, then it cannot get label $\hbar$ at any time $t' \geq t$, 
\end{itemize}
we can conclude that if $n=3$ and $h \in [5/7,13/18]$, then $\G_t$ for the octagon cellular automaton is the same set as $\T_{t+8} \cup \partial \T_{t+8}$.

\qed

\section{Explosive and robust regimes}
\label{hvaluessection}

In this section, we prove parts 2 and 3 of Theorem \ref{hvalues}.

\medskip

\noindent{\it Proof of Theorem \ref{hvalues}, part 2}

We will prove that for all $n$, all $h<1/2$ and all $t$, $|\T_{t}| \leq \frac n{1/2-h}$, where by $|\T|$ we denote the cardinality of a set $\T \subset \Z^d$. It follows that
\begin{equation}
|\T| \leq \frac n{1/2-h},
\label{Tbound}
\end{equation}
so that for all $h< 1/2$, the background is robust. 

Let $m_0$ be the total mass in $\T_{t} \cup \partial \T_{t}$ at time 0, and let $m_t$ the total mass in $\T_{t} \cup \partial \T_{t}$ at time $t$.
We have 
$$
m_0 = n + h|\T_{t}| + h|\partial \T_{t}|.
$$ 

At time $t$,  $\T_{t} \cup \partial \T_{t}$ contains a total mass of at least $\frac 1{2d}$ times the number of internal edges in $\T_{t} \cup \partial \T_{t}$. Namely, consider a pair of sites $\x$ and $\y$ connected by an internal edge. Each time that one of them splits, a mass of at least $\frac 1{2d}$ travels to the other one. 

%(in case the last splits of the endpoints up to time $t$ occurred simultaneously, such an amount came to rest at both end points). 
 
The number of internal edges in $\T_{t} \cup \partial \T_{t}$ is at least $d|\T_{t}|$. We demonstrate this by the following argument: Fix an ordering for the $2d$ edges connecting a site to its $2d$ neighbors, such that the first $d$ edges of the origin are in the same closed orthant. For every site $\x$ in $\T_{t}$, all its edges are in $\T_{t} \cup \partial \T_{t}$. If for every $\x \in \T_t$ we count only the first $d$ edges, then we count each edge in $\T_{t} \cup \partial \T_{t}$ at most once, and we arrive at a total of $d|\T_t|$.

Therefore, at least a mass of $d \frac{1}{2d} |\T_{t}| = \frac 12 |\T_{t}|$ remains in $\T_{t} \cup \partial \T_{t}$.
Moreover, since the sites in $\partial \T_{t}$ did not split, the mass $h$ at every site in $\partial \T_{t}$ also remains in $\T_{t} \cup \partial \T_{t}$. Therefore, we have 
$$
m_t \geq \frac 12 |\T_{t}| + h|\partial \T_{t}|.
$$

Finally, we note that since up to time $t$ no mass can have entered or left $\T_{t} \cup \partial \T_{t}$, we have $m_0 = m_t$.
Putting everything together, we find 
$$
n + h|\T_{t}| + h|\partial \T_{t}| \geq \frac 12 |\T_{t}| + h|\partial \T_{t}|,
$$
from which the result follows.
\qed

\medskip

\noindent{\it Proof of Theorem \ref{hvalues}, part 3}

We first give the proof for $d \geq 3$. 
\medskip

First, we need some notation.
Denote by $\mathcal{D}_r \subset \Z^d$ the diamond 
$\mathcal{D}_r = \{x: \sum_i x_i \leq r\}$, and by $\mathcal{L}_r \subset \Z^d$ the layer  $\{x: \sum_i x_i = r\}$.
Denote $d_k = (k,k,\ldots k) \in \Z^d$.
Let $\Gamma_{k,0} = d_k$, and for $i=1\ldots d$, let $\Gamma_{k,i}$ be the set of sites in $\mathcal{L}_{dk+i}$ that have $i$ nearest neighbors in $\Gamma_{k,i-1}$. Observe that for every $i$, $\Gamma_{k,i}$ is not empty, and that $\Gamma_{k,d} = d_{k+1}$.
For example, in dimension 3: $d_1 = (1,1,1)$, $\Gamma_{1,1} = \{(1,1,2), (1,2,1), (2,1,1)\}$, $\Gamma_{1,2} = \{(1,2,2),(2,1,2),(2,2,1)\}$, and $\Gamma_{1,3} = d_2 = (2,2,2)$. 

Let $p_d=\frac{d!}{(2d)^d}\sum_{l=2}^d\frac{(2d)^l)}{l!}$, $q_d=\frac{d!}{(2d)^{d-1}}$, and $h^*$ be defined by $q_d +p_d h^* = 2d(1-h^*)$, so that
$$
h^* = \frac{q_d - 2d}{p_d + 2d}.
$$ 
Finally, we define 
$$
C_d' = \max\{1-\frac 1d,h^*\},
$$
and remark that $C_d' \leq 1 - \frac{3}{4d+2}$, with equality only in the case $d=2$.

We will prove the following statement:

\begin{lemma}
In the splitting automaton on $\Z^d$, if $h \geq C_d'$, and $n \geq 2d(1-h)$, then for every $k \geq 1$, at time $dk+2$, the sites in $\Gamma_{k,1}$ split.
\label{diagonalprogress}
\end{lemma}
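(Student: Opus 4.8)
The plan is to prove Lemma \ref{diagonalprogress} by induction on $k$, tracking the mass that travels along the main diagonal direction $d_k = (k,\ldots,k)$ and is funneled through the sets $\Gamma_{k,i}$. The intuition is that the diagonal is the ``slowest'' direction of growth, so if the splitting automaton can keep making progress along the diagonal every $d$ time steps, it cannot stabilize; the constants $p_d$, $q_d$, $h^*$ and $C_d'$ are precisely engineered so that enough mass is delivered to $\Gamma_{k,1}$ by time $dk+2$ to make those sites unstable.

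First I would set up the base case $k=1$: here the relevant sites are near the origin, and one checks directly that by time $d+2$ the origin (with mass $n \geq 2d(1-h)$) has split, its mass has spread outward, and the sites in $\Gamma_{1,1}$ (those in $\mathcal{L}_{d+1}$ with one neighbor equal to $d_1$) have accumulated mass at least $1$. This is a finite computation whose worst case is exactly captured by the inequality $q_d + p_d h^* = 2d(1-h^*)$. For the inductive step, I assume the sites in $\Gamma_{k,1}$ split at time $dk+2$, and I want to conclude that the sites in $\Gamma_{k+1,1}$ split at time $d(k+1)+2 = dk+d+2$. Between these two times there are $d$ time steps, and the idea is to show that mass propagates from $\Gamma_{k,1}$ through $\Gamma_{k,2}, \ldots, \Gamma_{k,d} = d_{k+1}$ and then out to $\Gamma_{k+1,1}$, using that each site in $\Gamma_{k,i}$ has exactly $i$ neighbors in $\Gamma_{k,i-1}$ and (by the checkerboard Lemma \ref{checkerboardlemma}) the parities line up so these splits occur on consecutive time steps. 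The combinatorial factor $\frac{d!}{(2d)^d}\sum_{l=2}^d \frac{(2d)^l}{l!}$ in $p_d$ is the fraction of a unit of mass at $d_k$ that survives the cascade down the diagonal and reaches $d_{k+1}$, and $q_d = \frac{d!}{(2d)^{d-1}}$ similarly accounts for the mass relayed from the background sites $h$ along the way; requiring $h \geq h^*$ guarantees the total, $q_d + p_d h$, is at least $2d(1-h)$, the threshold needed for a site with one unstable neighbor contributing $\tfrac{1}{2d}$ of an amount $\geq 2d(1-h)$ to itself become unstable. The condition $h \geq 1 - \tfrac 1d$ (the other half of $C_d'$) is what ensures that a background site with a single split neighbor does not prematurely destabilize in a way that breaks the clean cascade structure, or more precisely is used to control the configuration on $\mathcal{L}_{dk+1}$.

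The main obstacle I anticipate is verifying that the configuration on and near the diagonal really does have the clean layered structure assumed in the cascade argument — i.e., that at time $dk+2$ not only do the $\Gamma_{k,1}$ sites split, but the neighboring sites have enough (and not too much in a disruptive way) mass so that the induction carries a strong enough hypothesis. A naive induction hypothesis ``$\Gamma_{k,1}$ splits at time $dk+2$'' is probably too weak to propagate; I would likely need to strengthen it to assert a lower bound on the mass present at $d_{k+1}$ (or at each site of the diagonal staircase $\Gamma_{k,i}$) at the appropriate times, of the form $\geq 2d(1-h)$ or the relevant partial sums, and then show this stronger statement is self-reproducing. Carrying the mass bookkeeping through the $d$ intermediate steps, keeping track of contributions from the $i$ diagonal predecessors versus the $2d-i$ lateral/background neighbors, and checking that the geometric series defining $p_d$ emerges correctly, is the technically delicate part; the rest is arithmetic with the explicitly given constants. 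Finally, I would note that the case $d=2$ is excluded here and handled separately (since there $C_2' = 1 - \tfrac 3{10}$ by the equality case), and that once Lemma \ref{diagonalprogress} is established, $\T = \Z^d$ follows because the diagonal sites split infinitely often, hence by symmetry and the remark in Section \ref{definitionssection} every site splits infinitely often, proving explosiveness for $h \geq C_d'$ (taking $C_d = C_d'$).
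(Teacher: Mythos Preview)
Your overall strategy—induction on $k$, cascading mass through $\Gamma_{k,1},\Gamma_{k,2},\ldots,\Gamma_{k,d}=d_{k+1}$ and then to $\Gamma_{k+1,1}$—is exactly the paper's approach. However, several details are off.

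First, you have the roles of $p_d$ and $q_d$ reversed. Unrolling the recursion $m(k,i)=h+\tfrac{i}{2d}m(k,i-1)$ for $i=2,\ldots,d$ gives $m(k,d)=q_d\,m(k,1)+p_d\,h$: the factor $q_d=\tfrac{d!}{(2d)^{d-1}}$ multiplies the mass coming in from $\Gamma_{k,1}$, while $p_d$ is the coefficient of the background $h$ accumulated along the way.

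Second, you misread the role of the condition $h\geq 1-\tfrac1d$. It is not there to \emph{prevent} unwanted destabilization; it is there to \emph{guarantee} that each intermediate site in $\Gamma_{k,i}$ (for $i\geq 2$) does become unstable. Such a site has $i\geq 2$ neighbors in $\Gamma_{k,i-1}$, each contributing at least $\tfrac1{2d}$, so it receives at least $\tfrac1d$; the requirement $h+\tfrac1d\geq 1$ is precisely $h\geq 1-\tfrac1d$.

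Third, your anticipated obstacle—that the bare hypothesis ``$\Gamma_{k,1}$ splits at time $dk+2$'' is too weak—does not arise. The key observation you are missing is that a site in $\Gamma_{k,i}\subset\mathcal{L}_{dk+i}$ lies outside $\mathcal{D}_{dk+i-1}$, so if it splits at time $dk+i+1$ this is necessarily its \emph{first} split; hence its mass at that moment is exactly $h$ plus whatever it has received, and the received mass from its $i$ neighbors in $\Gamma_{k,i-1}$ gives the lower bound directly. In particular $m(k,1)\geq 1$ simply because unstable means mass $\geq 1$, and the recursion then yields $m(k,d)\geq q_d+p_d h$. Requiring $h\geq h^*$ gives $m(k,d)\geq 2d(1-h)$, so the single neighbor $d_{k+1}$ of each site in $\Gamma_{k+1,1}$ delivers at least $1-h$, and the induction closes with no strengthened hypothesis needed. (Extra mass arriving from other neighbors in $\mathcal{L}_{dk+i-1}$ can only help; the argument tracks only lower bounds on the specific chain.)

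Finally, the paper's base case is $k=0$ (the neighbors of the origin split at time $2$ since $n\geq 2d(1-h)$), not $k=1$; and the lemma itself is not restricted to $d\geq 3$—rather, the paper proves it for all $d$ and then separately improves the constant for $d=2$.
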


Theorem \ref{hvalues}, part 3 follows from Lemma \ref{diagonalprogress} combined with Theorem \ref{rectangular}. Lemma \ref{diagonalprogress} tells us that for every $r$, there is a site on the boundary of the cube $\{x:\max_i x_i\leq r\}$ that splits at least once. By Theorem \ref{rectangular} and by symmetry, all sites in this cube split at least once. Therefore, $\lim_{t \to \infty} \T_t = \Z^d$. 

\begin{proof}

Note that at time $t$, no site outside $\mathcal{D}_t$ can have split yet, so if a site in $\Gamma_{k,i}$ splits at time $dk+i+1$, it does so for the first time. We will show that in fact the sites in $\Gamma_{k,i}$ do split at time $dk+i+1$.
Since we take the parallel splitting order, by symmetry, all sites in $\Gamma_{k,i}$ distribute the same mass when they split; denote by $m(k,i)$ the mass distributed in the first split of a site in $\Gamma_{k,i}$.

We will prove the lemma by induction. For $k=0$, the lemma is true, because we chose $n$ large enough. Now suppose it is true for some value $k$. Then at time $dk+2$, the sites in $\Gamma_{k,2}$ receive  $\frac 2{2d} m(k,1) \geq \frac 1d$, because they each have two neighbors in $\Gamma_{k,1}$. Since they did not split before, their mass is now at least $h + \frac 1d$. Therefore, if $h\geq 1-\frac 1d$, they split at time $dk+3$. This condition is fulfilled because $C_d' \geq 1-\frac 1d$.

Continuing this reasoning, we find for all $i = 2,\ldots,d-1$ that at time $dk+i+1$, the sites in $\Gamma_{k,i}$ split, because each site in $\Gamma_{k,i}$ has $i$ neighbors in $\Gamma_{k,i-1}$, so it receives mass $\frac i{2d} m(k,i-1) \geq \frac i{2d} \geq \frac 1d$.
We calculate, using that for $i=2, ..., d$, we have $m(k,i) = h+\frac{i}{2d} m(k,i-1)$,
$$
m(0,d) = q_d m(0,1)+p_d h \geq q_d +p_d h.
$$  
Recall that $\Gamma_{k,d} = \Gamma_{k+1,0}$. Therefore, at time $d(k+1)+1$ the sites in $\Gamma_{k+1,1}$ receive mass $\frac {m(0,d)}{2d}$. If $h+\frac{m(0, d)}{2d}\geq 1$, then the sites in $\Gamma_{k+1,1}$ split at time $d(k+1)+2$. This condition is fulfilled if $h \geq C_d'$. 
This completes the induction.
Therefore, in Theorem \ref{hvalues}, part 3, for $d \geq 3$ we can take $C_d = C_d'$.
\end{proof}

For $d=2$, $C_d'$ is equal to $1 - \frac{3}{4d+2} = 0.7$. In this case, we can take $C_2 = 13/19 = 0.684\ldots$:

\begin{proposition}
In the splitting automaton with $d=2$, the background is explosive if $h \geq 13/19$.
\end{proposition}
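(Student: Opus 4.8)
The plan is to run the same diagonal-progress argument as in the case $d \geq 3$, but with the dimension-two geometry made explicit, and to optimize the constant by tracking a slightly longer initial segment of splits along the diagonal rather than feeding the recursion with the worst-case mass immediately. In $d=2$ the relevant sets are simply $\Gamma_{k,0} = (k,k)$, $\Gamma_{k,1} = \{(k,k+1),(k+1,k)\}$ and $\Gamma_{k,2} = (k+1,k+1)$, so a single period of the recursion consists of only two steps. The first step is governed by Theorem~\ref{rectangular} (with $h \geq 1 - \tfrac1d = \tfrac12$, which holds here), giving that once the sites in $\Gamma_{k,1}$ split, the site $(k+1,k+1) = \Gamma_{k,2}$ becomes unstable as soon as $h + \tfrac1d \geq 1$. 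The second step needs that the single diagonal site $(k+1,k+1)$ distributes enough mass so that each of the two sites in $\Gamma_{k+1,1}$, receiving a $\tfrac1{2d} = \tfrac14$ share of it, together with their background $h$, exceeds $1$.

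First I would establish the base case by direct computation: starting from $\eta_n^h$ with $n \geq 2d(1-h) = 4(1-h)$, the origin splits, its four neighbors each receive $n/4 \geq 1-h$, hence become unstable, and one then runs the splitting automaton for the first several time steps (this is the same kind of finite inspection used in Figure~\ref{initialfigure} for $n=3$) to verify that the sites in $\Gamma_{1,1}$ split at time $d \cdot 1 + 2 = 4$, and moreover to extract a lower bound on $m(1,1)$, the mass each such site distributes, that is better than the crude $m(k,1) \geq 1$ bound used in Lemma~\ref{diagonalprogress}. Then I would set up the induction: assuming the sites in $\Gamma_{k,1}$ split at time $dk+2$ and distribute mass at least some explicit $m_* > 1$, I get $m(k,2) = h + \tfrac14 m_* \cdot 2$ wait --- more carefully, the diagonal site $(k+1,k+1)$ has two neighbors in $\Gamma_{k,1}$, so it accumulates $h + \tfrac12 m(k,1) \geq h + \tfrac12 m_*$ and splits (at time $dk+3$), then the two sites of $\Gamma_{k+1,1}$ each receive $\tfrac14\bigl(h + \tfrac12 m_*\bigr)$ from it, giving them total mass at least $h + \tfrac14 h + \tfrac18 m_* = \tfrac{5h}{4} + \tfrac{m_*}{8}$. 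The threshold $\tfrac{5h}{4} + \tfrac{m_*}{8} \geq 1$, solved for $h$ with the value of $m_*$ coming from the base-case inspection, is what produces $h \geq 13/19$; the induction then also re-supplies the lower bound $m(k+1,1) \geq m_*$ needed to close the loop, since $m(k+1,1) = \tfrac{5h}{4} + \tfrac{m_*}{8}$ is increasing in both arguments and one checks $\tfrac{5h}{4} + \tfrac{m_*}{8} \geq m_*$ at $h = 13/19$.

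Conclude exactly as before: Lemma~\ref{diagonalprogress}'s consequence gives a site on the boundary of every cube $\{\max_i x_i \leq r\}$ that splits, Theorem~\ref{rectangular} plus parallel symmetry forces the whole cube to split, hence $\T = \Z^2$ and $h$ is explosive.

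The main obstacle I expect is the base-case bookkeeping: one must run enough initial time steps of the exact splitting dynamics (with $h$ a free parameter in the stated range, as in Figure~\ref{initialfigure}) to pin down $m(1,1)$ accurately enough that the induction threshold lands at $13/19$ rather than at the weaker $0.7$; too short an inspection gives $m_* = 1$ and only recovers $C_2' = 7/10$, while too long an inspection needlessly complicates the verification. A secondary subtlety is checking that no site outside $\mathcal{D}_t$ has split by time $t$ (so that each first split along the diagonal is genuinely a first split and Lemma~\ref{checkerboardlemma}'s parity structure applies), but this is immediate since mass spreads at unit speed.
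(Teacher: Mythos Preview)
Your recursion tracks only the single diagonal site $(k,k)$ and its two forward neighbours $(k,k+1),(k+1,k)$. With that bookkeeping the step is exactly
\[
m(k+1,1)\;\geq\;h+\tfrac14\Bigl(h+\tfrac12\,m(k,1)\Bigr)=\tfrac{5h}{4}+\tfrac{1}{8}\,m(k,1),
\]
and this one-dimensional iteration has fixed point $\tfrac{10h}{7}$. Since $13/19<7/10$, that fixed point is strictly below $1$ for $h=13/19$, so no matter how large a base value $m_*$ you extract from the first few explicit steps, the sequence will drift below $1$ and the induction breaks. Concretely, your closing condition ``$\tfrac{5h}{4}+\tfrac{m_*}{8}\geq m_*$'' forces $m_*\leq\tfrac{10h}{7}<1$ at $h=13/19$, contradicting the requirement $m_*\geq 1$ that you need for the sites to split at all. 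Your scheme therefore cannot beat the $C_2'=7/10$ bound, regardless of how far you push the base case.

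The paper's proof gets the extra leverage by enlarging the set of sites tracked on each even layer $\mathcal{L}_{2k}$ from one to three: it carries the hypothesis that $(k-1,k+1)$ and $(k+1,k-1)$ have mass at least $1$ \emph{and} that $(k,k)$ has mass at least $2-h$. The point is that then $(k,k+1)$ has \emph{two} splitting neighbours in $\mathcal{L}_{2k}$, namely $(k-1,k+1)$ and $(k,k)$, so it receives at least $\tfrac14\cdot 1+\tfrac14(2-h)=\tfrac{3-h}{4}$ rather than only $\tfrac14 m(k,0)$. That single extra contribution is what pushes the threshold from $7/10$ down to $13/19$. The missing idea in your proposal is precisely this: you must propagate an induction hypothesis about the off-diagonal sites $(k\mp1,k\pm1)$ as well, not just the diagonal site.
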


\begin{proof}
In the proof of Theorem \ref{hvalues}, part 3, we have proved that at time $2k+2$, sites $(k+1,k)$ and $(k,k+1)$ split, making use of the fact that at time $2k+1$, site $(k,k)$ splits. We did not take into account that more sites in $\mathcal{L}_k$ might split at time $2k+1$.

We now choose $n \geq 64-84h$, so that at $t=3$, sites $(0,2)$, $(1,1)$ and $(2,0)$ split for the first time. We prove by induction that if $h \geq 13/19$, then at time $2k$, the sites $(k-1,k+1)$ and $(k+1,k-1)$ have mass at least 1, and site $(k,k)$ has mass at least $2-h > 1$. With our choice for $n$, this is true for $k=1$. Now suppose the hypothesis is true for $k$. 
This implies that at time $2k+1$, the sites sites $(k,k+1)$ and $(k+1,k)$ have mass at least $\frac {3h}4 + \frac 34$. If this is at least equal to $4-4h$, then we obtain the induction hypothesis for $k+1$. Solving $\frac {3h}4 + \frac 34 \geq 4-4h$ gives $h \geq 13/19$. 

\end{proof}

{\bf Remark} We have extended this method further, obtaining even smaller bounds for $h$, but as we increase the number of sites we consider, the calculations quickly become very elaborate, and the bound we obtain decreases very slowly. The smallest bound we recorded was $0.683$.

\section{The growth rate for $h < 0$.}
\label{ballsection}

In this section, we prove Theorem \ref{ball}. The proof will follow closely the method used for the proof of Theorem 4.1 in \cite{lionel}, based on the estimates presented in \cite{lionel}, Section 2. An important ingredient to obtain the bounds is that after stabilization, every site has mass at most 1, so that the mass $n$ starting from the origin, must have spread over a minimum number $\lfloor \frac n{1-h} \rfloor$ of sites. On the other hand, as shown in the proof of Theorem 3.1, part 2, we know: $|\T|\leq \frac{n}{\frac{1}{2}-h}$.

But crucial is the use of Green's functions and their asymptotic spherical symmetry, allowing to conclude more about the shape of the set of sites that split. Thus we can derive that $\T$ contains a ball of cardinality comparable to the coarse estimate $\lfloor \frac n{1-h} \rfloor$. Moreover, $\T$ is contained in a ball of cardinality close to $\lfloor \frac n{1/2 - h} \rfloor$. 

The method for $h<0$ does not depend on abelianness or monotonicity, therefore it can be adapted to the splitting model with arbitrary splitting order. 

We start with introducing some notation. 

Denote by $\mathbb{P}_0, \mathbb{E}_0$ as the probability and expectation operator corresponding to the Simple Random Walk $\left\langle X(t)\right\rangle$ starting from the 
origin. For $d\geq 3$, define
$$
g(z)=\mathbb{E}_0\sum_{t=1}^\infty I_{X(t)=z}.
$$
For $d=2$, define 
$$
g_n(z)=\mathbb{E}_0\sum_{t=1}^n I_{X(t)=z},
$$ 
and 
$$
g(z)=\lim_{n \to \infty}[g_n(z)-g_n(0)].
$$
Defining the operator $\Delta$ as
$$
\Delta f(x) = \frac 1{2d} \sum_{y\sim x}f(y) - f(x),
$$

From \cite{lawler}, we have $\Delta g(z)=-1$ when $x$ is the origin and $\Delta g(z)=0$ for all 
other $x\in\mathbb{Z}^d$.

By $u(x)$, we denote the total mass emitted from $x$ during stabilization. 
Then $\Delta u(x)$ is the net increase of mass at site $x$ during stabilization.
For all $x$, let $\eta_\infty(x)$ be the final mass at site $x$ after stabilization.
Since the final mass at each site is strictly less than 1, we have for all $x\in\mathbb{Z}^d$

\begin{equation}
\label{stablemass1}
\Delta u(x)+(n-h)\delta_{0,x} = \eta_\infty(x)- h < 1-h,
\end{equation}
with $\delta_{0,x}=1$ if $x$ is the origin and $0$ for all other $x$.

Moreover, since the final mass at each site $x \in \T$ is in $[0,1)$, we have for all $x \in \T$

\begin{equation}
\label{stablemass2}
-h \leq \Delta u(x)+(n-h)\delta_{0,x} = \eta_\infty(x)- h < 1-h,
\end{equation}
\medskip

\noindent{\it Proof for the inner bound:}

\medskip

For $x\in\mathbb{Z}^d$, $|x|$ is the Euclidean distance from $x$ to the origin.
Let 
$$
\tilde{\xi}_d(x)= (1-h)|x| ^2+(n-h) g(x)  \mbox{ if } d\geq 2,
$$
and let 
$$
\xi_d(x)=\tilde{\xi}_d(x)-\tilde{\xi}_d(\lfloor c_1 r\rfloor e_1),
$$
 with $e_1=(1, 0, 0, ...,0 )$. 
  
From Lemma 2.2 of \cite{lionel}, we have:
$$
\xi_d(x)=O(1), ~ x\in\partial \mathbf{B}_{c_1 r},
$$
therefore there is a constant $C>0$ such that $|\xi_d(x)|<C, ~ x\in\partial \mathbf{B}_{c_1 r}$. Then
$$
 u(x)-\xi_d(x)\geq -\xi_d(x)>-C, ~ x\in\partial \mathbf{B}_{c_1 r}.
$$
Furthermore, using \eqref{stablemass1}, we have for all $x\in\mathbb{Z}^d$,
$$
\Delta(u-\xi_d)=\Delta u-\Delta \xi_d < 1-h-(n-h)\delta_{0,x}-(1-h)-(n-h)\delta_{0,x} = 0.
$$
Therefore, $u-\xi_d$ is superharmonic, which means that it reaches its minimum value on the boundary. Now, as in the proof of Theorem 4.1 of \cite{lionel}, the estimates in Lemmas 2.1 and 2.3 of \cite{lionel} can be applied to conclude that there is a suitable constant $c_2$ such that $u(x)$ is positive for all $x\in \mathbf{B}_{c_1r-c_2}$.
\qed

The proof for the outer bound is more involved that that in \cite{lionel}, because Lemma 4.2 from \cite{lionel}, which is valid for the abelian sandpile growth model with $h \leq 0$, is not applicable for the splitting model. In essence, this lemma uses that if $u(x) > u(y)$ for some $x$ and $y$, then the difference must be at least 1, because mass travels in the form of integer grains. Clearly, we have no such lower bound in the splitting model. 

We note that in \cite{FLP}, a different proof for the outer bound appeared which is valid for the abelian sandpile growth model with $h<d$. Unfortunately, we cannot adapt this proof for the splitting model either. We will comment on this in Section \ref{openproblems}.

We therefore first present some lemma's which we need to prove the outer bound.

\begin{lemma}
Let $h<0$, and take $x_0\in \T$ adjacent to $\partial{\T}$.

There is a path $x_0\sim x_1\sim x_2 \sim \cdots \sim x_m=0$ in $\T$ with
$$
u(x_{k+1}) > u(x_k) - \frac{2d}{2d-1} h, \hspace{2cm} ~ k = 0,\ldots,m-1.
$$
\label{massincrease}
\end{lemma}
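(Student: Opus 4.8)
The plan is to build the path greedily, always stepping to a neighbor where $u$ drops by at most $\tfrac{2d}{2d-1}|h|$, and to guarantee such a neighbor always exists by a local conservation argument at each site. Fix $x_0 \in \T$ adjacent to $\partial\T$ and suppose we have already constructed $x_0 \sim x_1 \sim \cdots \sim x_k$ in $\T$ with the required inequalities, and that $x_k \neq 0$. I want to find a neighbor $x_{k+1}$ of $x_k$ with $u(x_{k+1}) > u(x_k) + \tfrac{2d}{2d-1}h$ (recall $h<0$). The key local identity is the mass balance at $x_k$: the amount $x_k$ emits in total is $u(x_k)$, and the amount it receives is $\tfrac1{2d}\sum_{y\sim x_k} u(y)$; combining this with \eqref{stablemass2} (which applies since $x_k \in \T$) gives
$$
\frac1{2d}\sum_{y \sim x_k} u(y) - u(x_k) = \Delta u(x_k) = \eta_\infty(x_k) - h - (n-h)\delta_{0,x_k} \geq -h,
$$
using $x_k \neq 0$ and $\eta_\infty(x_k) \geq 0$. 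Hence $\sum_{y\sim x_k} u(y) \geq 2d\,u(x_k) - 2dh$, so the \emph{average} of $u$ over the $2d$ neighbors is at least $u(x_k) - h$.

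That average bound is not quite enough on its own, because some neighbors might lie outside $\T$ and carry $u = 0$, dragging the others down. The fix is to note that the relevant neighbors for continuing the path are those in $\T$, and to bound how much mass can be ``wasted'' on neighbors outside $\T$. A neighbor $y \notin \T$ never split, so $u(y) = 0$; but such a $y$ is adjacent to $x_k \in \T$, so it received mass and its final mass is $\eta_\infty(y) \geq h + \tfrac1{2d}u(x_k)$ — and since $\eta_\infty(y) < 1$ this actually bounds $u(x_k)$, but more usefully it tells us that having a neighbor outside $\T$ is a constrained situation. The cleaner route: let $N = \{y \sim x_k : y \in \T\}$ and $N^c$ the rest, with $|N^c| = \ell$. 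Then $\sum_{y \in N} u(y) \geq 2d u(x_k) - 2dh$, and since $|N| = 2d - \ell$, the maximum of $u$ over $N$ is at least $\frac{2d u(x_k) - 2dh}{2d - \ell} \geq u(x_k) - \frac{2d}{2d-\ell} h \geq u(x_k) - \frac{2d}{2d-1}h$ provided $\ell \leq 2d - 1$, i.e. provided $x_k$ has at least one neighbor in $\T$. Picking $x_{k+1}$ to be a maximizer gives the stated inequality, and $x_{k+1} \in \T$ keeps the induction going.

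It remains to argue (i) that $x_k$ always has at least one neighbor in $\T$ — true because $x_k \in \T$ topples, hence emits mass to all $2d$ neighbors, and at least one of them, namely one closer to the origin along a lattice path, must itself be in $\T$ (indeed $\T$ is connected and contains $0$, so from any $x_k \neq 0$ in $\T$ there is a neighbor in $\T$); and (ii) that the process terminates at $0$. For termination, the inequality $u(x_{k+1}) > u(x_k) + \tfrac{2d}{2d-1}h$ alone allows wandering, so I would instead always choose $x_{k+1}$ among neighbors in $\T$ that are strictly closer to the origin in $\ell^1$-distance; one must then check the averaging bound still delivers such a choice. This is the main obstacle: one needs that the maximum of $u$ over the \emph{inward} neighbors in $\T$ (not all neighbors in $\T$) is still at least $u(x_k) - \tfrac{2d}{2d-1}h$. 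The way around it is that the ``wasted'' directions — neighbors outside $\T$, plus outward neighbors — together number at most $2d-1$ unless $x_k$ is an interior point of $\T$ all of whose inward neighbors are also interior, but interior points have \emph{all} neighbors in $\T$, so a suitable bookkeeping (charging the outward neighbors' contribution, which is itself $\leq$ something controlled since those neighbors are also in $\T \cup \partial\T$) lets one conclude. Concretely I would split into the case $x_k$ interior to $\T$ (then all $2d$ neighbors are in $\T$, half of them inward, and a short symmetry/averaging argument over just the inward half works) and the case $x_k$ adjacent to $\partial\T$ (then $x_k$ has at most $2d-1$ neighbors in $\T$, giving $\ell \geq 1$ and the denominator bound above, and among those one is inward). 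Iterating, the $\ell^1$-distance to the origin strictly decreases, so after $m \leq |x_0|_1$ steps we reach $x_m = 0$.
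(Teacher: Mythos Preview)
Your averaging bound breaks precisely when you need it most: at interior points of $\T$. If $x_k$ has all $2d$ neighbors in $\T$ (so $\ell=0$), your argument gives only $\max_{y\sim x_k} u(y)\geq u(x_k)-h$, which is \emph{weaker} than the required $u(x_k)-\tfrac{2d}{2d-1}h$ (recall $h<0$, so $-\tfrac{2d}{2d-1}h>-h$). Your own computation confirms this: the inequality $\tfrac{2d(u-h)}{2d-\ell}\geq u-\tfrac{2d}{2d-1}h$ holds for $\ell\geq 1$ but fails for $\ell=0$. Since after the first step the path will typically run through the interior of $\T$, you cannot establish the lemma's inequality at those steps. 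The paper's remedy is exactly the idea you are missing: at each step choose $x_{k+1}$ to be the neighbor of $x_k$ with maximal $u$ \emph{among all neighbors except $x_{k-1}$}. Then $\Delta u(x_k)\leq \tfrac{2d-1}{2d}u(x_{k+1})+\tfrac{1}{2d}u(x_{k-1})-u(x_k)$, and combining $\Delta u(x_k)\geq -h$ with the inductive fact $u(x_{k-1})<u(x_k)$ yields the correct $\tfrac{2d}{2d-1}$ factor even in the interior.

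Your termination argument is a second, independent gap. There is no ``symmetry/averaging argument over just the inward half'' that forces an inward $\ell^1$-neighbor to have large $u$; nothing in the setup privileges directions toward the origin. The paper's termination is entirely different and falls out of the construction above: since $u(x_k)$ is strictly increasing along the path, no site can be revisited, and by the bound $|\T|\leq \tfrac{n}{1/2-h}$ the path is finite and hence must end at the origin (the only place where the recursion can stop, because there $\Delta u\geq -h$ fails due to the $(n-h)\delta_{0,x}$ term).
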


\begin{proof}

We will first show that we can find a nearest neighbor path such that:
$$
\frac{2d-1}{2d}u(x_{k+1})-u(x_k)+\frac{1}{2d} u(x_{k-1})\geq -h.
$$
Let $x_1$ be the nearest neighbor of $x_0$ that loses the maximal amount of mass among all the nearest neighbors of $x_0$. If there is a tie, then we make an arbitrary choice. 
Because $x_0$ has at least one neighbor that does not split, 
$\Delta u(x_0)\leq \frac{2d-1}{2d}u(x_1)-u(x_0)$. Therefore:

\begin{equation}
\label{startingpoint}
\frac{2d-1}{2d}u(x_1)-u(x_0)\geq \Delta u(x_0) =\eta_{\infty}(x_0)-h \geq -h.
\end{equation}
For $k \geq 1$ take $x_{k+1} \neq x_{k-1}$ to be the site that looses the maximal amount of mass among all the nearest neighbors of $x_{k}$ 
except $x_{k-1}$. If there is a tie, then we make an arbitrary choice. It is always possible to choose $x_{k+1}$. As long as $x_k$ is not the origin, then we get from \eqref{stablemass2} that:
$$
\frac{2d-1}{2d}u(x_{k+1})+\frac{1}{2d}u(x_{k-1})-u(x_{k})\geq \eta_{\infty}(x_{k})-h \geq -h.
$$

Thus we get a chain $\{x_k,k = 0,1, \ldots\}$ of nearest neighbors, that possibly ends at the origin. 

We rewrite
$$
u(x_{k+1})\geq \frac{2d}{2d-1} u(x_k)- \frac{1}{2d-1} u(x_{k-1}) - \frac{2d}{2d-1} h.
$$
With this expression, and the fact that $u_0 < u_1$ by \eqref{startingpoint}, it is readily derived by induction that $u(x_{k-1}) < u(x_{k})$. Inserting this, we obtain $u(x_{k+1}) > u(x_k) - \frac{2d}{2d-1} h$, so that $u(x_k)$ is strictly increasing in $k$ (recall that $h<0$).

Now it is left to show that the chain does end at the origin. We derive this by contradiction: suppose the 
chain does not visit the origin. Then the chain cannot end, because 
there is always a new nearest neighbor that loses the maximal amount of mass among all the new nearest neighbors. 
But the chain cannot revisit a site that is already in the chain, because $u(x_k)$ is strictly increasing in $k$. But by \eqref{Tbound}, 
the chain cannot visit more than $\frac{n}{1/2 -h}$ sites. 
Therefore, the chain must visit the origin. 
\end{proof}

Define $Q_k(x)=\{y\in\mathbb{Z}^d: \max_i|x_i-y_i|\leq k\}$ as the cube centered at $x$ with radius $k$.
Let
$$
u^{(k)}(x)=(2k+1)^{-d}\sum_{y\in Q_k(x)} u(y)
$$
be the average loss of mass of the sites in cube $Q_k(x)$, and 
$$
\T^{(k)}=\{x: Q_k(x)\subset \T\}.
$$

\begin{lemma}
$\Delta u^{(k)}(x)\geq \frac{k}{2k+1}-h-\frac{(n-h)}{(2k+1)^d}\ind_{0 \in Q_k(x)}$, for all $x\in \T^{(k)}$.
\label{boxaverage}
\end{lemma}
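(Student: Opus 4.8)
The plan is to use that the discrete Laplacian $\Delta$ is translation invariant and therefore commutes with the cube average. Writing $Q_k(x)=x+Q_k(0)$ and using that $z\sim x$ if and only if $z+w\sim x+w$, one gets for every $x$
$$
\Delta u^{(k)}(x)=(2k+1)^{-d}\sum_{w\in Q_k(0)}\Delta u(x+w)=(2k+1)^{-d}\sum_{y\in Q_k(x)}\Delta u(y),
$$
so it suffices to bound $\sum_{y\in Q_k(x)}\Delta u(y)$ from below. By \eqref{stablemass1}, which rearranges to $\Delta u(y)=\eta_\infty(y)-h-(n-h)\delta_{0,y}$ for every $y\in\Z^d$, summing over the cube yields
$$
\sum_{y\in Q_k(x)}\Delta u(y)=\sum_{y\in Q_k(x)}\eta_\infty(y)-h(2k+1)^d-(n-h)\ind_{0\in Q_k(x)}.
$$
Hence the lemma is equivalent to the purely geometric estimate $\sum_{y\in Q_k(x)}\eta_\infty(y)\ge k(2k+1)^{d-1}$ for every $x\in\T^{(k)}$.

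To prove this estimate I would reuse the internal-edge argument from the proof of Theorem \ref{hvalues}, part 2. Since $x\in\T^{(k)}$ means $Q_k(x)\subseteq\T$, every site of $Q_k(x)$ splits at least once and, the model being stabilizable, only finitely often. For each internal edge $\{y,y'\}$ of $Q_k(x)$, let $\tau$ be the last time at which $y$ or $y'$ splits; at that time the splitting endpoint transfers a mass of at least $\frac{1}{2d}$ across the edge, and since neither endpoint splits after $\tau$, this mass stays forever at an endpoint of the edge, hence inside $Q_k(x)$. Attributing to each internal edge the endpoint holding this mass (breaking ties arbitrarily), the contributions of distinct edges do not overlap: at a fixed site, the incident such contributions arrive at times after which that site never splits again, and two contributions arriving at the same time come from two different neighbours and are therefore added. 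Consequently $\sum_{y\in Q_k(x)}\eta_\infty(y)$ is at least $\frac{1}{2d}$ times the number of internal edges of $Q_k(x)$. That cube has exactly $2k(2k+1)^{d-1}$ edges parallel to each of the $d$ coordinate axes, hence $d\cdot 2k(2k+1)^{d-1}$ internal edges, and we obtain $\sum_{y\in Q_k(x)}\eta_\infty(y)\ge k(2k+1)^{d-1}$, as needed.

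Combining the pieces,
$$
\Delta u^{(k)}(x)\ge(2k+1)^{-d}\Bigl(k(2k+1)^{d-1}-h(2k+1)^d-(n-h)\ind_{0\in Q_k(x)}\Bigr)=\frac{k}{2k+1}-h-\frac{n-h}{(2k+1)^d}\ind_{0\in Q_k(x)}.
$$
The only genuinely delicate point is the no-double-counting bookkeeping in the edge argument, i.e.\ checking that the mass attributed to distinct internal edges really are disjoint pieces of the final mass; but this is exactly the mechanism already used for Theorem \ref{hvalues}, part 2, so I expect it to be routine rather than a real obstacle. Everything else is the linear algebra of the averaging operator together with the arithmetic above.
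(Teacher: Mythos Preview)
Your proof is correct and follows essentially the same approach as the paper's. The paper writes $\Delta u^{(k)}(x)=\frac{1}{(2k+1)^d}\sum_{y\in Q_k(x)}\Delta u(y)$ without justification and then invokes an external result (Proposition~5.3 of \cite{haiyan}) for the bound $\sum_{y\in Q_k(x)}\eta_\infty(y)\ge \frac{1}{2d}\cdot(\text{internal edges})$, whereas you supply both steps explicitly, the second via the same internal-edge argument already used in the proof of Theorem~\ref{hvalues}, part~2; the structure and arithmetic are otherwise identical.
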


\begin{proof} From Proposition 5.3 of \cite{haiyan}, we know for every $x$:
\begin{equation}
\sum_{y\in Q_k(x)}\eta_\infty(y)\geq \frac{1}{2d}( \textrm{number of internal bounds in } Q_k(x)).
\end{equation}
Equation (\ref{stablemass1}) tells that
$\Delta u(y)=\eta_\infty(y) -h-(n-h)\delta_{0,y}$.
Therefore 
$$
\Delta u^{(k)}(x)=\frac{1}{(2k+1)^d}\sum_{y\in Q_k(x)}[\eta_\infty(y) -h-(n-h)\delta_{0,y}].
$$
Since $Q_k(x)$ has $2dk(2k+1)^{d-1}$ internal bounds, we get:
$$
\Delta u^{(k)}(x)\geq \frac{k}{2k+1}-h-\frac{(n-h)}{(2k+1)^d}\ind_{0 \in Q_k(x)}.
$$
\end{proof}

\begin{lemma}
\label{maxmass}
For every $x \notin \T^{(k)}$,
$$
u(x)< a',
$$
where $a'$ depends only on $k$, $d$ and $h$.
\end{lemma}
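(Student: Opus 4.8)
The statement asserts a uniform bound on $u(x)$ for sites $x$ outside $\T^{(k)}$, i.e. for sites $x$ such that the cube $Q_k(x)$ is not entirely contained in $\T$. The key observation is that if $x \notin \T^{(k)}$, then $Q_k(x)$ contains at least one site $y$ that never split, so $u(y) = 0$. The plan is to use Lemma \ref{massincrease}: starting from such a site, or from a nearby site in $\T$ adjacent to $\partial\T$, we control how fast $u$ can grow along the chain produced there. More precisely, I would argue as follows. If $x \notin \T$, then $u(x) = 0 \leq a'$ trivially for any nonnegative $a'$. So assume $x \in \T \setminus \T^{(k)}$. Then some site $y \in Q_k(x)$ has $u(y) = 0$; in particular there is a site in $\T$ within $\ell^\infty$-distance $k$ of $x$ that is adjacent to $\partial\T$ (walk from $x$ toward $y$ along a monotone lattice path until you leave $\T$; the last site before leaving is in $\T$ and adjacent to $\partial\T$, and it is within distance $k$ of $x$). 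Call this boundary-adjacent site $x_0$, with $\|x - x_0\|_\infty \leq k$, hence $\|x - x_0\|_1 \leq dk$.

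The next step is to bound $u(x_0)$. Here I would invoke Lemma \ref{massincrease} \emph{in reverse}: that lemma gives a nearest-neighbor path $x_0 \sim x_1 \sim \cdots \sim x_m = 0$ in $\T$ along which $u$ is strictly increasing with $u(x_{k+1}) > u(x_k) - \frac{2d}{2d-1}h$ (recall $h < 0$, so $-h > 0$). This controls $u$ from $x_0$ toward the origin, which is the wrong direction for bounding $u(x_0)$ from above. So instead I would apply the same construction but anchored at the site in $\T$ adjacent to $\partial\T$ that we want to bound. Actually the cleaner route: since each step of the chain in Lemma \ref{massincrease} increases $u$ by a bounded amount, and (by \eqref{Tbound}) the chain has length at most $\frac{n}{1/2-h}$, we get $u(x_0) \leq u(0) + \frac{2d}{2d-1}|h| \cdot \frac{n}{1/2-h}$, giving a bound on $u(x_0)$ in terms of $u(0)$. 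But $u(0)$ itself needs bounding — and here one uses that the origin emits mass $n$, which must be absorbed over at least $\lfloor n/(1-h)\rfloor$ sites, combined with $|\T| \leq n/(1/2-h)$, to get $u(0) = O(n)$ with an explicit constant. Alternatively, and more in the spirit of a \emph{local} bound independent of $n$, one should reverse the roles: build a chain \emph{away} from $x$ toward a site where $u$ vanishes. I would look for the chain of Lemma \ref{massincrease} starting from the boundary-adjacent site \emph{nearest to $x$}, and note that along that chain $u$ is monotone; running it backwards from $x_0$ toward $\partial\T$ shows $u$ decreases, so $u(x_0)$ is bounded by the maximum of $u$ over the chain, which terminates...

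Let me restructure: the honest plan is that $u(x) \leq u(x_0) + (\text{something depending on } \|x-x_0\|_1 \leq dk)$, because $x$ and $x_0$ are joined by a short path and along any nearest-neighbor path $u$ can change by a controlled amount per step (from \eqref{stablemass1} one gets $|\Delta u(y)| \leq 1 - h + (n-h)\delta_{0,y}$, but this still has $n$ in it unless $0 \notin Q_k(x)$). The cleanest statement: for $x$ with $0 \notin Q_k(x)$ — which holds for all but finitely many $x \notin \T^{(k)}$ — the maximum principle argument from Lemma \ref{boxaverage} shows $\Delta u^{(k)} \geq \frac{k}{2k+1} - h > 0$ on $\T^{(k)}$, so $u^{(k)}$ is subharmonic there; since $u^{(k)} = 0$ off $\T^{(k)}$ (no, it need not be zero)... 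Given the difficulty, I would present the argument as: pick the monotone lattice path from $x$ to the vanishing site $y \in Q_k(x)$; it has at most $dk$ steps; along it track $u$ using Lemma \ref{massincrease}'s mechanism applied to successive boundary-adjacent sites, concluding $u(x) < dk \cdot \frac{2d}{2d-1}|h| + C$ where $C$ handles the finitely many cubes containing the origin and can be absorbed. The quantity $a' = a'(k,d,h)$ emerges with explicit dependence $a' = O\!\left(\frac{dk|h|}{2d-1}\right)$ plus a correction near the origin.

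\textbf{Main obstacle.} The delicate point is extracting a bound on $u(x_0)$ (the $\ell^\infty$-near boundary site) that does \emph{not} secretly depend on $n$. Lemma \ref{massincrease} naturally propagates information from the boundary \emph{inward} toward the origin, whereas I need to go the other way, from an interior-ish point $x$ out to a zero of $u$. The resolution should be that $u$ is monotone decreasing as one moves \emph{toward} $\partial\T$ along the Lemma \ref{massincrease} chain (it was increasing toward $0$), so that $u(x_0) \leq u(x_0')$ for $x_0'$ one step closer to $x$, and iterating a short ($\leq dk$ steps) monotone path gives $u(x) \leq u(x_0) + (\le dk)\cdot \frac{2d}{2d-1}|h|$, with $u(x_0)$ itself handled because $x_0$ is adjacent to a non-splitting site and \eqref{startingpoint} forces $\frac{2d-1}{2d}u(x_1) - u(x_0) \geq -h$ — no: that bounds $u(x_1)$ below, not $u(x_0)$ above. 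I suspect the intended argument instead bounds $u(x)$ directly via Lemma \ref{boxaverage}: since $x \notin \T^{(k)}$, $Q_k(x)$ meets $\partial\T$, and applying the discrete maximum principle to $u^{(k)} - (\text{quadratic comparison function})$ on a region anchored at $x$ yields the bound; this is the step I would need to work out carefully, and it is where all the $k$- and $h$-dependence of $a'$ is generated.
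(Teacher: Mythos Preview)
Your proposal has a genuine gap: you are trying to use Lemma~\ref{massincrease}, which is built on the \emph{lower} bound $\Delta u \geq -h$ on $\T$, to propagate an \emph{upper} bound on $u$. As you correctly diagnose in your ``Main obstacle'' paragraph, that lemma only pushes information toward the origin, and any attempt to reverse it or combine it with \eqref{Tbound} reintroduces $n$. Your suggested bound $a' = O(dk|h|/(2d-1))$ is not justified and is in fact the wrong order.

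The paper's argument uses the \emph{upper} bound on $\Delta u$ from \eqref{stablemass1} directly, and it is much more elementary than anything you attempt. Since $u \geq 0$ everywhere, for \emph{any} site $y$ and \emph{any} neighbor $z$ of $y$,
\[
u(z) \;\leq\; \sum_{y' \sim y} u(y') \;=\; 2d\bigl(\Delta u(y) + u(y)\bigr) \;<\; 2d(1-h) + 2d\,u(y).
\]
This one-step bound works in every direction, not just toward the origin, and does not involve $n$ (the $\delta_{0,y}$ term in \eqref{stablemass1} only helps). Now take any nearest-neighbor path from the non-splitting site $y_0 \in Q_k(x)$ (where $u(y_0)=0$) to $x$; since $\|x-y_0\|_\infty \leq k$, such a path has bounded length depending only on $k$ and $d$. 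Iterating the displayed inequality along this path gives
\[
u(x) < (1-h)\sum_{j=1}^{L}(2d)^j < 2(1-h)(2d)^{L},
\]
where $L$ is the path length. The paper takes $L = (2k+1)^d$ and sets $a' = 2(1-h)(2d)^{(2k+1)^d}$. Note the bound is exponential in $k$, not linear; this is the price of propagating an upper bound through the crude inequality $u(z) \leq \sum_{y'\sim y} u(y')$.
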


\begin{proof}
For $x \notin \T^{(k)}$, there is at least one site $y_0\in Q_k(x)$ that does not split. For $l\geq 1$, take $y_l$ as 
the nearest neighbor of $y_{l-1}$ that loses the maximal amount of mass among all the neighbors of $y_{l-1}$. Since $y_0$ does not split, we have
$$
\frac{1}{2d}\sum_{y \sim y_0}u(y)< 1-h.
$$
Therefore $u(y_1)< 2d(1-h)$. For every $l>1$, we have from \eqref{stablemass1} that $\frac{1}{2d}\sum_{y \sim y_l} u(y)< 1-h+u(y_l)$, 
therefore  $u(y_{l+1})< 2d(1-h)+2d u(y_l)$.
We know there are at most $(2k+1)^d$ sites in $\{y_l\}_{l=0}$. Then:
$$
\max_{x\in Q_k(x)}u(x)< (1-h)\left[ (2d)+(2d)^2+\cdots+(2d)^{(2k+1)^d}\right]< 2(1-h)(2d)^{(2k+1)^d},
$$
so we can choose $a' =  2(1-h)(2d)^{(2k+1)^d}$.
\end{proof}

\noindent{\it Proof of the outer bound:}

\medskip

First, we wish to find an upper bound for $u(x)$ for all $x$ with $c_1' r-1 < |x| \leq c_1' r$, that does not depend on $n$. If $x$ is not in $\T^{(k)}$, then we use Lemma \ref{maxmass}.

For $x \in \T^{(k)}$, take 
$$
\hat{\psi}_d(x) = (\frac{1}{2}-\epsilon-h)|x|^2+(n-h)g(x) \mbox{ if } d\geq 2.
$$
For a fixed small $\epsilon$, we choose $k$ such that
$$
\frac{k}{2k+1} \geq \frac{1}{2}-\epsilon.
$$
For the fixed chosen $k$, define
$$
\tilde{\phi}_d(x)=\frac{1}{(2k+1)^d} \sum_{y\in Q_k(x)} \hat{\psi}_d(y).
$$
Take 
$$
\phi_d(x)=\tilde{\phi}_d(x)-\tilde{\phi}_d(\lfloor c_1'r\rfloor e_1).
$$
By calculation, we obtain $\Delta \phi_d(x)=\Delta \tilde{\phi}_d(x)=1/2-\epsilon-h-(n-h)\ind_{0 \in Q_k(x)}$. Then from Lemma
\ref{boxaverage}, we know
\begin{equation}
\Delta(u^{(k)}-\phi_d)=\Delta u^{(k)}-\Delta \phi_d \geq 0, \forall x \in \T^{(k)}.
\end{equation}
This shows that $u^{(k)}-\phi_d$ is subharmonic on $\T^{(k)}$. So, it takes its maximal value on the boundary.
We combine this information with some lemma's: 
\begin{itemize}
\item Lemma 2.4 of \cite{lionel} gives that for all $x$, $\phi_d(x) \geq -a$ for some constant $a$ depending only on $d$. 
\item Lemma \ref{maxmass} gives that for every $x\in \partial{\T^{(k)}}$, $u(x)< a'$.
\item Finally, from Lemma 2.2 of \cite{lionel}, there is a $\tilde{c}_2$ which only depends on $\epsilon, d$ and $h$, such that for $x$ 
with $c_1'r-1< |x|\leq c_1'r$, $\phi_d(x) \leq \tilde{c}_2$. 
\end{itemize}

The first two lemma's imply that for $x \in \partial{\T^{(k)}}$, $u^{(k)}(x)-\phi_d(x) \leq a' + a$, an upper bound that does not depend on $n$.  
Therefore, since $u^{(k)}-\phi_d$ is subharmonic on $\T^{(k)}$, 
$$
u(x)-\phi_d(x) \leq a' + a, \forall x\in \T^{k}.
$$ 
Combining this with the third lemma, we get: 
$$
u(x) \leq \tilde{c}_2+ a' + a, \forall x\in \mathbf{B}_{c_1'r}\cap \T^{k}.
$$
Therefore, there is an upper bound for $u(x)$ that does not depend on $n$, for all $x\in \mathbf{B}_{c_1'r}\cap \T^{k}$. 
From Lemma 6.3, we know also for $x\notin \T^{k}$, $u(x)<a'$. 
Summarizing all, we obtain that for all $x$ with $c_1'r-1< |x| \leq c_1'r$, 
$u(x)\leq \tilde{C}$, with $\tilde{C}$ a constant that does not depend on $n$.

To summarize, for all $x$ with 
$c_1'r-1< |x| \leq c_1'r$, $u(x)\leq \tilde{C}$, with $\tilde{C}$ a constant that does not depend on $n$.

Now it remains to show that a site that splits, must lie at a bounded distance $c_2'$ from $ \mathbf{B}_{c_1' r}$.
This follows from Lemma \ref{massincrease}: From every site $x_0$ that splits, there is a path along which $u(x)$ increases by an amount of at least $-\frac{2d}{2d-1}h$ every step, and this path continues until the origin. 
Then along the way, this path must cross the boundary of $ \mathbf{B}_{c_1' r}$, and there $u(x) \leq \tilde{C}$. 
Therefore, we can choose $c_2' = - \frac{(2d-1)\tilde{C}}{2dh}$. 
\qed

\section{Open problems}
\label{openproblems}

Based on numerical simulations, we present some tantalizing open problems.

\medskip

\subsection{A critical $h$?}~
\label{criticalh}

\medskip

In Theorem \ref{hvalues}, we give two regimes for $h$ for which we know that the splitting model is explosive resp. robust. In between, there is a large interval for $h$ where we can prove neither. We conjecture however that the two behaviors are separated by a single critical value of $h$, and that this value does not even depend on the splitting order. In dimension 2, our simulations indicate that this critical $h$ is 2/3. 

\medskip

\begin{conjecture}~
\begin{enumerate}
\item For the splitting model on $\Z^d$, there exists a $h_c = h_c(d)$ such that
for all $h < h_c$, the model is robust, and for all $h \geq h_c$, the model is explosive.
\item $h_c(2) = 2/3$.
\end{enumerate}
\end{conjecture}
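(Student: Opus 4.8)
Since the statement is a conjecture rather than a theorem, I can only describe a plausible line of attack. For part (1), the plan is to reduce the dichotomy to a \textbf{monotonicity of explosiveness in $h$}: it suffices to prove that if a background $h_0$ is explosive then every $h\geq h_0$ is explosive, and dually that if $h_1$ is robust then every $h\leq h_1$ is robust. Setting $h_c=\inf\{h:h\text{ explosive}\}$ then yields part (1), provided one also excludes the intermediate regimes flagged in the Remark after Definition~\ref{differentshapes}'s preceding definition (the ``neither robust nor explosive'' possibility). The obvious route to such monotonicity is a coupling of the two parallel-order dynamics started from $\eta_n^{h_0}$ and $\eta_n^{h_1}$: write $\eta_n^{h_1}=\eta_n^{h_0}+(h_1-h_0)\ind$, track the surplus mass separately, and try to show by induction in $t$ that the split counts of the $h_1$-dynamics dominate those of the $h_0$-dynamics, so that $\T=\Z^d$ for the first whenever it holds for the second.

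The obstruction here is exactly the \emph{failure of monotonicity} recorded in the Introduction: $\T$ is not monotone in $h$ for fixed $n$, so no naive coupling of the full configurations survives. A possible way around this is to stop comparing $\T_t$ at every $t$ and compare only the eventual behaviour: for the explosive side one is free to take $n$ as large as one wishes, and the surplus mass $(h_1-h_0)$, while it can locally slow the dynamics, is positive everywhere and intuitively cannot prevent an advancing front from advancing. One would try to make this rigorous using the interval-labelling/cellular-automaton machinery of Section~\ref{explosionssection}: if a finite label set and transition rules witnessing explosiveness at $h_0$ remain valid --- with the \emph{same} recurrent pattern --- when the mass intervals $\M(\cdot)$ are widened to accommodate a larger background, then that pattern proves explosiveness on a whole interval $[h_0,h_1)$, and chaining such intervals gives the up-set property.

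For part (2) one must pin $h_c(2)$ to $2/3$ from both sides. On the \textbf{robust side} ($h<2/3$, $d=2$) the target is to sharpen the estimate $|\T|\leq n/(\tfrac12-h)$ from Theorem~\ref{hvalues}(2) to $|\T|\leq n/(\tfrac23-h)$. That bound came from $\sum_{y\in Q_k(x)}\eta_\infty(y)\geq\tfrac1{2d}\cdot(\text{number of internal bonds of }Q_k)\to\tfrac12|Q_k|$, so what is needed is an improved asymptotic lower bound of $\tfrac23$ for the average stabilized mass in a large box in dimension $2$; one would try to recover the extra $\tfrac16$ by accounting for the mass that multiply-splitting sites push across their bonds more than once, or through a checkerboard/parity refinement (cf.\ Lemma~\ref{checkerboardlemma}) of which sites can be simultaneously empty. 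On the \textbf{explosive side} ($h\geq2/3$) the diagonal-progress argument of Lemma~\ref{diagonalprogress} and the subsequent Proposition must be pushed to their limit; since merely widening the tracked band saturates near $0.683$, a genuinely self-similar invariant is required --- a finite label set and transition rules (again in the style of Section~\ref{explosionssection}) carrying a renewal pattern that keeps the diagonal front moving for \emph{every} $h\in[2/3,1)$ at once, with $2/3$ emerging as the precise value at which one of the interval inclusions verifying the transition rules first becomes tight.

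The \textbf{main obstacle}, common to both parts, is the same: without monotonicity there is no soft argument forcing a single threshold, and the two concrete bootstrap methods currently available --- edge/mass counting on the robust side, band-tracking on the explosive side --- each stop strictly short of $2/3$. Closing the gap therefore requires identifying the \emph{critical} self-similar structure of the stabilized configuration at $h=2/3$ in $d=2$, a structure the simulations strongly suggest exists but which we have not been able to describe explicitly; I expect this, rather than any of the bookkeeping above, to be the crux.
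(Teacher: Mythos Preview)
The statement is a \emph{conjecture} in the paper's open-problems section; the paper offers no proof, only the numerical evidence and the partial bounds of Theorem~\ref{hvalues}. You correctly recognise this and frame your proposal as a programme rather than a proof, so there is nothing to compare against on the paper's side.

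As a research outline your proposal is sensible and well-informed by the paper's methods: reducing part~(1) to monotonicity-in-$h$ of explosiveness, attempting to sharpen the $\tfrac12$ in the robust bound to $\tfrac23$, and pushing the diagonal-front argument on the explosive side. You also correctly identify the real obstruction --- the documented failure of monotonicity in $h$ --- and are honest that the interval-labelling machinery of Section~\ref{explosionssection}, as used so far, produces results only on bounded $h$-intervals and so cannot by itself give a single threshold. One caution: your suggestion to ``chain such intervals'' would require, for every $h_0$ in the explosive regime, a cellular automaton witnessing explosion on some open interval containing $h_0$; nothing in the paper guarantees such automata exist for $h$ near the putative critical value, and the simulations in Figure~\ref{weirdgrowth} (at $h=0.667$) suggest that near $2/3$ the dynamics may not settle into any finite recurrent pattern at all. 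That is precisely the gap you name in your final paragraph, and it remains the crux.
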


\medskip

\subsection{The robust regime}~

\medskip

We have proved Theorem \ref{ball} for all $h < 0$. We hoped to extend this result to all $h<1/2$, by adapting the proof used in Section 3.1 of \cite{FLP} for the abelian sandpile growth model (ASGM). However, the first step of this proof uses the fact that $u_n$ is nondecreasing in $n$, where $u_n$ is the total number of topplings that each site performs in stabilizing $\eta^h_n$. This follows from abelianness of the topplings. Since the splitting model is not abelian, we were not able to adapt this proof to work for our model. Nevertheless, we conjecture

\medskip

\begin{conjecture}
Theorem \ref{ball} holds for all $h<1/2$. 
\end{conjecture}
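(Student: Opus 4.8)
The plan is to retain the entire argument already given for the outer bound of Theorem~\ref{ball} and to replace only its final step, the application of Lemma~\ref{massincrease}. Indeed, every other ingredient survives unchanged for $0 \le h < 1/2$: the test function $\hat\psi_d(x) = (\tfrac12-\epsilon-h)|x|^2+(n-h)g(x)$ only needs $\tfrac12-\epsilon-h>0$, which holds once $\epsilon$ is small; Lemma~\ref{boxaverage} and Lemma~\ref{maxmass} were proved for all $h<1$; and the constants $a$, $a'$, $\tilde c_2$ depend on $\epsilon$, $h$, $d$, $k$ but not on $n$. So one still concludes, exactly as before, that there is a constant $\tilde C$ independent of $n$ with $u^{(k)}(x) \le \tilde C$ (and hence $u(x)\le (2k+1)^d\tilde C$) for all $x$ in the shell $c_1'r-1 < |x| \le c_1'r$. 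What is missing is a substitute for Lemma~\ref{massincrease}, whose hypothesis $h<0$ was used precisely to make $-\tfrac{2d}{2d-1}h>0$, i.e.\ to exploit the strict subharmonicity $\Delta u(x)\ge -h>0$ of $u$ on $\T\setminus\{0\}$.

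The replacement idea is that, although $u$ itself is not subharmonic when $h\ge 0$, its box-average $u^{(k)}$ is: by Lemma~\ref{boxaverage}, choosing $k$ so large that $\tfrac{k}{2k+1}\ge\tfrac12-\epsilon$, we get $\Delta u^{(k)}(x)\ge \mu := \tfrac{k}{2k+1}-h>0$ for every $x\in\T^{(k)}$ with $0\notin Q_k(x)$. I would then prove a box-averaged analogue of Lemma~\ref{massincrease}. Let $x_0$ be a splitting site at maximal Euclidean distance $R$ from the origin (so $x_0$ is automatically adjacent to $\partial\T$), and build a greedy nearest-neighbor path by always stepping to the neighbor where $u^{(k)}$ is largest. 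Strict subharmonicity forces $u^{(k)}$ to increase by at least $\mu$ at each step, so the path never revisits a site; by the bound $|\T|\le \tfrac{n}{1/2-h}$ of \eqref{Tbound} it is finite, hence it must terminate, and it can only terminate inside $Q_k(0)$ or upon leaving $\T^{(k)}$. In the first case the path crosses the shell $c_1'r-1<|x|\le c_1'r$ at a site where $u^{(k)}\le\tilde C$; in the second case Lemma~\ref{maxmass} bounds $u$, hence $u^{(k)}$, on the exit box by $a'$, again independently of $n$. Either way, $u^{(k)}$ is bounded by some $n$-free constant $C^*$ at a point at distance at most $c_1'r+O(k)$ from the origin. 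Since $u^{(k)}\ge 0$ at $x_0$ and increases by $\ge\mu$ per step, the path has at most $C^*/\mu$ steps, each changing $|x|$ by at most $1$; hence $R\le c_1'r+C^*/\mu+O(k)$, giving $\T\subset\mathbf{B}_{c_1'r+c_2'}$ with $c_2'=C^*/\mu+O(k)$ depending only on $\epsilon$, $h$, $d$.

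The main obstacle I anticipate is the interface between $\T^{(k)}$ and its complement. The greedy path a priori lives on all of $\Z^d$, whereas the estimate $\Delta u^{(k)}\ge\mu$ is only available on $\T^{(k)}\setminus Q_k(0)$, so one must rule out the path getting trapped near $\partial\T^{(k)}$ with $u^{(k)}$ still large, or oscillating back and forth across that boundary. The monotonicity of $u^{(k)}$ along the path together with Lemma~\ref{maxmass} (which controls $u$, uniformly in $n$, on \emph{every} box meeting $\partial\T^{(k)}$) ought to close this gap, but making it airtight is exactly the delicate point. An alternative that bypasses the path is to compare $u^{(k)}$ against an explicit superharmonic barrier that vanishes outside $\mathbf{B}_{c_1'r+c_2'}$; the difficulty there is producing a barrier whose constants are uniform in $n$, which is essentially the same estimate in disguise. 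I expect that resolving the $\partial\T^{(k)}$ interface issue is precisely what stands between the conjecture and a theorem.
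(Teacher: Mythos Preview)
This statement is listed in the paper as an open \emph{conjecture}; the authors give no proof and explain that neither their path-of-increase argument (Lemma~\ref{massincrease}, which needs $-h>0$) nor the monotonicity-in-$n$ approach of \cite{FLP} extends to $h\ge 0$. So there is nothing in the paper to compare your attempt against; I can only comment on whether your sketch would actually settle the conjecture.

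Your instinct to replace $u$ by the box average $u^{(k)}$ is natural, and you are right that Lemmas~\ref{boxaverage} and~\ref{maxmass} and the test function $\hat\psi_d$ survive for all $h<1/2$, so the subharmonic comparison on $\T^{(k)}$ and the uniform bound on the shell $|x|\approx c_1'r$ go through. The gap, however, is more than the ``delicate point'' you describe. Your greedy path begins at a site $x_0\in\T$ of maximal distance from the origin. Such a site is adjacent to $\partial\T$, so $Q_k(x_0)\not\subset\T$ and hence $x_0\notin\T^{(k)}$ for every $k\ge 1$: the inequality $\Delta u^{(k)}(x_0)\ge\mu$ is simply unavailable, and the path cannot take even its first step with a guaranteed gain of $\mu$. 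More broadly, nothing in the argument excludes $\T$ having a thin tendril --- a one-site-wide arm, say --- extending far beyond $\mathbf{B}_{c_1'r}$; every site on such an arm lies outside $\T^{(k)}$, so the averaged subharmonicity is nowhere available along it, and Lemma~\ref{maxmass} bounds only the \emph{mass} $u$ on the arm, not its \emph{length}. Your ``second case'' then tacitly assumes that the point where the path exits $\T^{(k)}$ lies within $c_1'r+O(k)$ of the origin, but that is precisely the conclusion you are after. In short, the obstacle is not a boundary-layer technicality but the absence of any mechanism, for $h\ge 0$, forcing $\T\setminus\T^{(k)}$ to stay within bounded distance of $\T^{(k)}$; this is exactly why the paper leaves the statement as a conjecture.
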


\medskip

\subsection{The explosive regime}~

\medskip

\begin{figure}[h]
\centering
\includegraphics[width=.19\textwidth]{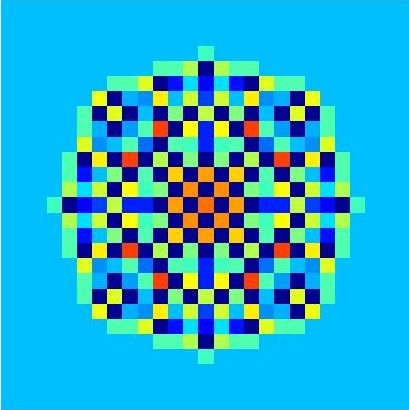}
\includegraphics[width=.19\textwidth]{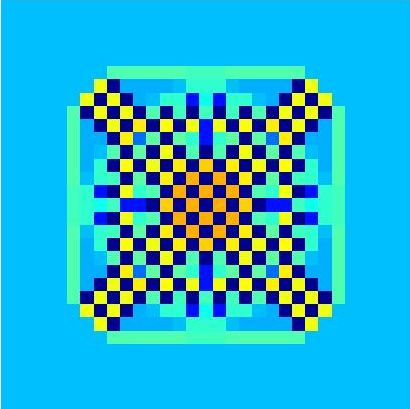}
\includegraphics[width=.19\textwidth]{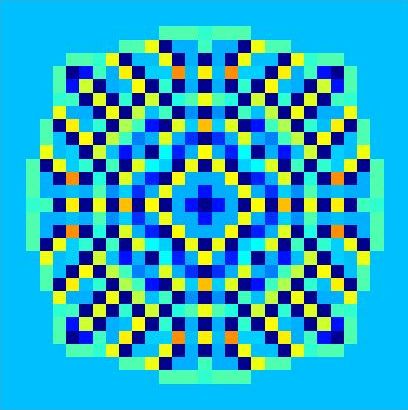}
\includegraphics[width=.19\textwidth]{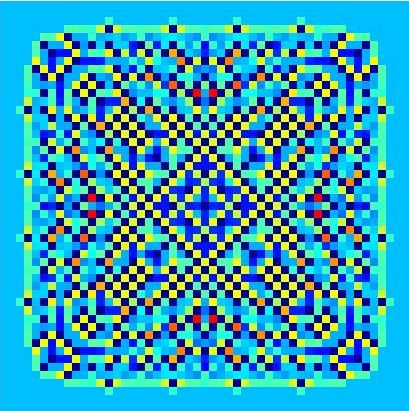}
\includegraphics[width=.19\textwidth]{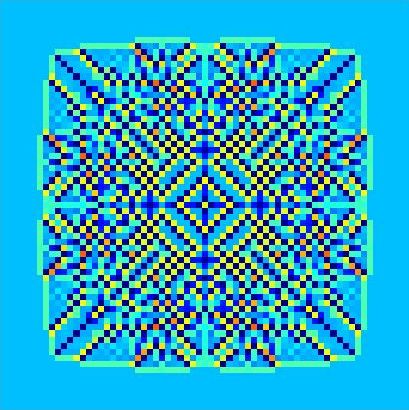}
\caption{The splitting automaton with $h=0.667$ and $n = 16$: From left to right, $t=17$, $t=24$, $t=39$, $t=76$, $t=103$. In this case, a limiting shape may not exist.}
\label{weirdgrowth}
\end{figure}

We have only just started classifying the multitude of shapes of the splitting automaton, that one can observe by varying $h$. We are confident that our method is capable of generating many more limiting shape results. In some cases, we observe that varying $n$ can make a difference, however, we expect the following to be true:

\medskip

\begin{conjecture}
For the splitting automaton on $\Z^d$, for every $h \in [1 - \frac{3}{4d+2}, 1)$ there exists a $n_0$ such that for every $n > n_0$, the limiting shape is a polygon, and depends only on $h$ and $d$.  
\end{conjecture}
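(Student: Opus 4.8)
\medskip
\noindent\textbf{Proof proposal.}
The plan is to extend the finite-state cellular-automaton method of Section~\ref{explosionssection} from the three explicit cases to the whole interval $[1-\frac{3}{4d+2},1)$. Fix $d$ and such an $h$; by Theorem~\ref{hvalues}(3) (for $d\ge 2$) the background is explosive, so for $n$ large we already know $\T=\Z^d$, and the remaining content is that $\T_t$ grows linearly, has a polygonal limit, and that the polygon does not depend on $n$. The first step is to construct, from $h$ and $d$ alone, a finite partition of $[0,\infty)$ into intervals $I_1,\dots,I_N$ that (i) has among its endpoints all breakpoints $1,\,4-4h,\,16-20h,\dots$ that decide whether a site or a neighbour of it becomes unstable, and (ii) is \emph{closed} under the splitting update, i.e.\ for any labels of a cell and its $2d$ neighbours the value of $\eta_{t+1}(\x)$ prescribed in Section~\ref{definitionssection} always falls into a single $I_j$. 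One builds this by starting from the decision breakpoints and iteratively adjoining the images of all tuples of current breakpoints under the local update of Section~\ref{definitionssection}, until (if ever) a fixed point is reached. The induced map on interval labels is then the transition rule of a finite cellular automaton $A_{h}$, and an interval-valued mass map $\M_{h}$ plays the role that $\M_d,\M_s,\M_o$ play in the proof of Theorem~\ref{differentshapes}.

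The second step has two halves. First, for all sufficiently large $n$, after a finite transient time $T$ the configuration $\eta_T$ lies in $\M_{h}^{\xi}$ for a configuration $\xi$ of $A_{h}$ that, outside a bounded core around the origin, does not depend on $n$: once a site has split at least once it can receive, before splitting again, mass from at most $2d$ neighbours each carrying at most a constant $M(h,d)$ (the geometric-series bound of Lemma~\ref{maxmass}), so after its first split every site's mass stays in $[0,M(h,d))$, which is covered by finitely many $I_j$; and the core stabilises to a recurrent labelling because $A_h$ is finite-state and, by explosiveness, its frontier keeps moving outward and never returns. Second, $A_h$ started from such a $\xi$ has a polygonal limiting shape with scaling function of order $1/t$: a finite alphabet forces the frontier, viewed through a window co-moving in any fixed lattice direction $v$, to take only finitely many profiles, hence to be eventually periodic in time, giving a well-defined asymptotic speed $c(v)$; finitely many directions are ``flat'' and produce straight edges while the intermediate directions interpolate, exactly as the tile/cornerstone pattern does in the octagon case, so the rescaled growth cluster converges to the polygon cut out by the supporting lines at speed $c(v)$, and this polygon is determined by $A_h$ and hence by $h$ and $d$ only.

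The step I expect to be the real obstacle is the first one: it is not clear a priori that the iterative refinement of the interval partition terminates, since repeatedly averaging breakpoints and dividing by $2d$ could in principle generate an infinite, even dense, set of new breakpoints, in which case $A_h$ would not be finite. This should be attacked by noting that the masses that actually arise lie in $n\,\Z[\tfrac{1}{2d}]+h\,\Z[\tfrac{1}{2d}]+\Z[\tfrac{1}{2d}]$ and that, over the bounded time horizon needed to exhibit one period of the frontier, only boundedly many elements of this set with bounded denominator fall into the relevant window $[0,M(h,d))$; pinning down this bound uniformly in $n$ is the crux. A secondary difficulty, forced by the failure of monotonicity, is the claim that the core stabilises to the \emph{same} recurrent pattern for all large $n$: one must exclude that raising $n$ perturbs the core forever, which should again follow once the frontier is known to have permanently ``outrun'' any fixed core, but making this quantitative without monotonicity needs a linear lower bound on growth in every direction, whereas Lemma~\ref{diagonalprogress} only supplies it along the main diagonal.
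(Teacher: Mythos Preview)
The paper does not prove this statement: it is listed as a \emph{conjecture} in the open-problems section, with no proof offered. So there is no ``paper's own proof'' to compare against; what you have written is a proof \emph{strategy} for an open problem, and it should be assessed as such.

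As a strategy it is the natural one, and you are candid about the two main obstacles (termination of the interval refinement, and $n$-independence of the core). Both are genuine. Let me add two further gaps that you do not flag.

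First, your appeal to Lemma~\ref{maxmass} for the uniform mass bound $M(h,d)$ is misplaced. That lemma bounds the \emph{total emitted mass} $u(x)$ for sites near $\partial\T$ in the robust regime $h<0$; it says nothing about the \emph{current mass} $\eta_t(x)$ in the explosive regime, where $u(x)=\infty$ for every $x$. The paper's explicit automata do obtain mass ceilings (e.g.\ $16-20h$, $60-80h$), but each is proved ad hoc from the specific inductive pattern, not from a general lemma. By the parity observation of Lemma~\ref{checkerboardlemma} a splitting site empties, so one does get $\sup_x\eta_{t+1}(x)\le\sup_x\eta_t(x)$; however this only shows the supremum is nonincreasing, and it starts at $n$. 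You need an argument that after some $T=T(n)$ the supremum drops below a constant depending only on $h,d$, and the paper provides none.

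Second, your step from ``finite alphabet'' to ``polygonal limiting shape'' is too quick. The claim that the frontier profile in a window co-moving in direction $v$ takes only finitely many values presumes the window is bounded in the \emph{transverse} direction; otherwise there are infinitely many possible profiles and no pigeonhole argument for eventual periodicity. In the paper's three worked cases the transverse boundedness is exactly what the tile/cornerstone structure supplies, and it is established by explicit induction, not by a general principle. The Gravner--Griffeath theory you allude to concerns monotone threshold dynamics, which the splitting automaton is not; their polygonal-shape theorem does not transfer automatically. So even granting a finite $A_h$, the existence of a limiting shape, let alone a polygonal one, remains to be argued.

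In short: your outline captures the spirit of the method in Section~\ref{explosionssection}, but each of the four issues above (partition termination, mass ceiling independent of $n$, transverse boundedness of the frontier pattern, and $n$-independence of the core) is currently a conjecture-within-the-conjecture. The paper's authors presumably stopped at three explicit cases precisely because they could not close these gaps in general.
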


This conjecture is reminiscent of Theorem 1 on threshold growth in \cite{gravner}, but the splitting automaton is not equivalent to a two-state cellular automaton. 

For smaller values of $h$, the behavior of the splitting automaton seems to be not nearly as orderly. In Figure \ref{weirdgrowth}, we show the behavior at $h = 0.667$, where we conjecture the model to be explosive. The shape of $\T$ seems to alternate between square and rounded. We are not sure whether a limiting shape exists for this value of $h$. 

\medskip

{\bf Acknowledgement:} We thank Michel Dekking and Ronald Meester for careful reading and helpful suggestions. A.F. thanks the Delft 
University of Technology for hospitality.

\end{document}